\theoremstyle{plain}
\newtheorem{theorem}{Theorem}[section]
\newtheorem{lemma}[theorem]{Lemma}
\newtheorem{proposition}[theorem]{Proposition}
\theoremstyle{definition}
\newtheorem{definition}[theorem]{Definition}
\newtheorem{example}[theorem]{Example}
\newtheorem{remark}[theorem]{Remark}
\DeclareMathOperator{\ob}{ob}
\DeclareMathOperator{\mor}{mor}
\DeclareMathOperator{\Aut}{Aut}
\DeclareMathOperator{\id}{id}
\DeclareMathOperator{\Supp}{Supp}
\DeclareMathOperator{\Ring}{Ring}
\DeclareMathOperator{\ideal}{ideal}
\begin{document}

\title{Simple Rings and Degree Maps}

\author{Patrik Nystedt\footnote{Address: University West, Department of Engineering Science, SE-46186 Trollh\"{a}ttan, Sweden, E-mail: Patrik.Nystedt@hv.se} \and Johan \"{O}inert\footnote{Address: Centre for Mathematical Sciences, Lund University, P.O. Box 118, SE-22100 Lund, Sweden, E-mail: Johan.Oinert@math.lth.se}}

\maketitle

\begin{abstract}
For an extension $A/B$ of neither necessarily associative
nor necessarily unital rings,
we investigate the connection
between simplicity of $A$ with
a property that we call
$A$-simplicity of $B$.
By this we mean that
there is no non-trivial ideal $I$
of $B$ being $A$-invariant, that is satisfying $AI \subseteq IA$. 
We show that $A$-simplicity of $B$ is
a necessary condition for simplicity of $A$
for a large class of ring extensions
when $B$ is a direct summand of $A$.
To obtain sufficient conditions for 
simplicity of $A$, we introduce the concept
of a degree map 
for $A/B$.
By this we mean a map $d$ from $A$ to the
set of non-negative integers 
satisfying the following two conditions
(d1) if $a \in A$, then 
$d(a) = 0$ if and only if $a=0$;
(d2) there is a subset $X$ of $B$ generating
$B$ as a ring such that for each non-zero ideal $I$
of $A$ and each non-zero $a \in I$
there is a non-zero $a' \in I$ with $d(a') \leq d(a)$
and $d(a'b - ba') < d(a)$ for all $b \in X$.
We show that if the centralizer $C$ 
of $B$ in $A$ is an $A$-simple ring,
every intersection of $C$
with an ideal of $A$ is $A$-invariant,
$A C A = A$ and there is a degree map for $A/B$,
then $A$ is simple.
We apply these results to various types of graded 
and filtered rings, such as skew group rings,
Ore extensions and Cayley-Dickson doublings.
\end{abstract}


\section{Introduction}\label{introduction}

Let $A/B$ be a ring extension.
By this we mean that $A$ and $B$ are rings that are neither necessarily associative
nor necessarily unital with $B$ contained in $A$.
For general ring extensions, simplicity of $A$
is, of course, neither a necessary nor 
a sufficient condition for simplicity of $B$.
However, in many cases where $A$ is graded or filtered,
and $B$ sits in $A$ as a direct summand,
simplicity of $A$ is connected to weaker simplicity
conditions for $B$.
In particular, this often holds if there is an action on 
the ring $B$ induced by the ring structure on $A$. 
The aim of this article is to investigate
both necessary (see Theorem \ref{firsttheorem}) and
sufficient (see Theorem \ref{secondtheorem}) conditions
on $B$ for simplicity of $A$.
The impetus for our approach is threefold.

The first part of our motivation comes
from group graded ring theory and topological dynamics.
A lot of attention has been given the 
connection between properties of topological
dynamical systems $(X,s)$ and the ideal structure
of the corresponding $C^*$-algebras $C^*(X,s)$
(see e.g. \cite{davidson96}, \cite{power78},
\cite{tomiyama87}, \cite{tomiyama92} and 
\cite{williams07}).
In particular, Power \cite{power78} has shown
that if $X$ is a compact Hausdorff space of infinite cardinality,
then $C^*(X,s)$ is simple if and only if 
$(X,s)$ is minimal.
Inspired by this result,
Öinert \cite{oinert09,oinertarxiv11} 
has shown that there is an analogous result
for skew group algebras defined by
general group topological dynamical systems.
In fact Öinert loc. cit.
shows this by first establishing the following result
for general skew group rings
(for more details concerning topological 
dynamics, see Section \ref{applicationtopological}). 

\begin{theorem}[Öinert \cite{oinert09,oinertarxiv11}]\label{oinertAabelian}
Suppose that $B$ is an associative, unital ring
and $A= B \rtimes^{\sigma} G$ is a skew group ring.
{\rm (a)} If $B$ is commutative,
then $A$ is simple if and only if $B$ is $G$-simple
and $B$ is a maximal commutative subring of $A$;
{\rm (b)} If $G$ is abelian, then $A$ 
is simple if and only if $B$ is $G$-simple
and $Z(A)$ is a field.
\end{theorem}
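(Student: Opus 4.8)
The plan is to read off both directions from the paper's general machinery, using the $G$-grading $A = \bigoplus_{g \in G} B u_g$, in which $B = B u_e$ is the degree-$e$ component and hence a direct summand of $A$. First I would record the dictionary between the abstract and the concrete notions: for an ideal $I$ of $B$ the condition $AI \subseteq IA$ unwinds, via $u_g \beta u_g^{-1} = \sigma_g(\beta)$, to $\sigma_g(I) = I$ for all $g \in G$, so that $A$-simplicity of $B$ is exactly $G$-simplicity. With this in hand the necessity directions are quick. Theorem \ref{firsttheorem} applies because $B$ is a direct summand of $A$, so simplicity of $A$ forces $B$ to be $A$-simple, i.e. $G$-simple, in both (a) and (b). For the remaining necessary conditions I would use two standard facts: the centre of a simple unital ring is a field, which gives the condition on $Z(A)$ in (b) at once; and, for commutative $B$, the standard centralizer argument, whereby a failure of maximal commutativity yields a nonzero element of the centralizer outside $B$ whose coefficients, fed through the support analysis below, produce a proper nonzero ideal of $A$, contradicting simplicity.

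For sufficiency I would invoke Theorem \ref{secondtheorem} with $C = C_A(B)$ and the candidate degree map $d(a) = |\Supp(a)|$, where $\Supp(a) = \{ g \in G : b_g \neq 0 \}$ for $a = \sum_g b_g u_g$; property (d1) is immediate. Case (a) is the clean one. Maximal commutativity of the commutative ring $B$ says precisely $C = B$, so the four hypotheses of Theorem \ref{secondtheorem} read: $C = B$ is $A$-simple ($= G$-simple, which is assumed); $C \cap I = B \cap I$ is $A$-invariant for every ideal $I$ of $A$, which holds because $u_g (B \cap I) u_g^{-1} \subseteq B \cap I$ translates to $\sigma_g(B \cap I) \subseteq B \cap I$; $ACA = ABA = A$, since $1 \in B$; and the degree-map condition (d2) with $X = B$. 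For (d2) I would, given $0 \neq a \in I$, translate by the unit $u_{g_0^{-1}}$ so that $e \in \Supp(a')$ with $d(a') = d(a)$, and then observe that for every $b \in B$ the identity-component of $a'b - ba'$ is $b_e b - b b_e = 0$ by commutativity of $B$, whence $d(a'b - ba') < d(a)$. Theorem \ref{secondtheorem} then yields simplicity of $A$.

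Case (b) follows the same template but is where the real work lies; here I would argue directly (this being the concrete incarnation of Theorem \ref{secondtheorem}). Since $B$ need not be commutative, $C = C_A(B)$ is genuinely larger than $B$ and the naive (d2) step fails: after translating so that $e \in \Supp(a')$, the identity-component of $a'b - ba'$ is the commutator $[b, b_e]$ in $B$, which need not vanish. This is exactly where abelianness of $G$ enters, replacing the commutativity of $B$ used in (a). Since $G$ is abelian, conjugation by $u_h$ preserves supports, $u_h a u_h^{-1} = \sum_g \sigma_h(b_g) u_g$, and $u_g C u_g^{-1} = C$, so the $A$-invariance of $C \cap I$ and $ACA = A$ (using $1 \in C$) are checked as before. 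To produce a nonzero element of $I$ in the centre, I take a minimal-support $a \in I$: the set of $g_0$-coefficients of elements of $I$ supported on its support is a $G$-invariant ideal of $B$, hence all of $B$ by $G$-simplicity, so I may normalize a coefficient to $1$; minimality together with commutators with $B$ then drives the element into $C$, and minimality together with conjugation by the $u_h$ forces its coefficients to be $G$-invariant, landing it in $Z(A)$. The endgame uses the remaining hypothesis directly: a nonzero element of the field $Z(A)$ is invertible, so $I$ contains a unit and $I = A$.

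I expect the main obstacle to be precisely the degree/centralizer bookkeeping in case (b): ensuring that support-preserving conjugation, made available by abelianness of $G$, can substitute for the commutativity of $B$ exploited in (a), so that $G$-simplicity forces a leading-coefficient ideal to be all of $B$ and drives a minimal-support element of any nonzero ideal into $Z(A)$. By contrast, the necessity directions and the whole of case (a) should be routine once the grading dictionary and Theorems \ref{firsttheorem}--\ref{secondtheorem} are in place.
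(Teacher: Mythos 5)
Most of your plan is correct and, for the three directions it handles in detail, it is essentially the route this paper itself takes: your invariance dictionary ($AI\subseteq IA$ iff $\sigma_g(I)=I$ for all $g$) is Proposition \ref{crossedproductGinvariant}; necessity of $G$-simplicity via Theorem \ref{firsttheorem} is how the paper argues (through Propositions \ref{Rsimpleimplies} and \ref{crossedproductsimpleimplies}); your verification of (d2) in case (a) --- translate by the unit $u_{g_0^{-1}}$ so that $e\in\Supp(a')$, then note the $e$-component of $a'b-ba'$ is $b_eb-bb_e=0$ --- is exactly the proof of Proposition \ref{maximalcommutativedegreemap}, fed into Theorem \ref{secondtheorem} as in Propositions \ref{propgroupoidsimple}(a) and \ref{crossedproductmaximalcommutativegroupoidgraded}(a); and your ``direct'' argument in case (b) (the $g_0$-coefficient ideal of minimal-support elements of $I$ is $G$-invariant because abelianness of $G$ makes conjugation by $u_h$ support-preserving, hence equals $B$ by $G$-simplicity; normalize $b_e=1$; commutators then kill the $e$-component) is precisely Proposition \ref{abeliandegreemap} unfolded. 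Your endgame --- drive $a'$ into $Z(A)$ by conjugations and invert it in the field --- is a harmless variant of the paper's appeal to Theorem \ref{secondtheorem} with $C_A(A)=Z(A)$ simple; the two are equivalent since $Z(A)$ is commutative and unital. The necessity of ``$Z(A)$ is a field'' in (b) from simplicity of the associative unital ring $A$ is standard, as you say.

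The genuine gap is the ``only if'' half of (a): that simplicity of $A$ forces the commutative $B$ to be maximal commutative. Your sentence that a centralizer element ``fed through the support analysis below'' produces a proper nonzero ideal is not an argument, and it \emph{cannot} come from the degree-map machinery: Theorem \ref{secondtheorem} and the degree maps only manufacture central elements inside ideals, i.e.\ they give sufficiency, never properness of an ideal. (The paper does not prove this direction either; Theorem \ref{oinertAabelian} is quoted from \"{O}inert, and the paper's own results recover only the other three implications.) What is actually needed is a construction along the following lines. If $a\in C_A(B)\setminus B$, comparing components of $ab=ba$ shows each homogeneous component of $a$ lies in $C_A(B)$, so there are $g\neq e$ and $b_g\neq 0$ with $b_gu_g\in C_A(B)$; commutativity of $B$ then yields $b_g(\sigma_g(c)-c)=0$ for all $c\in B$. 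The ideal generated by $b_gu_g$ alone is useless (it is all of $A$ if $A$ is simple); instead take the two-sided ideal generated by $w=b_gu_g-b_gu_e$. It is nonzero, and it is proper because the coefficient-sum map $\varepsilon\bigl(\sum_h c_hu_h\bigr)=\sum_h c_h$ vanishes on it: $(cu_h)w(du_k)=c\sigma_h(b_g)\sigma_{hg}(d)u_{hgk}-c\sigma_h(b_g)\sigma_h(d)u_{hk}$, and $\sigma_h(b_g)\bigl(\sigma_{hg}(d)-\sigma_h(d)\bigr)=\sigma_h\bigl(b_g(\sigma_g(d)-d)\bigr)=0$, so this ideal meets $B$ trivially and in particular omits $1$, contradicting simplicity. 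Without this (or an equivalent) construction, that direction of (a) is missing from your proof.
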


The second part of our motivation comes
from the filtered class of rings called Ore extensions
$A = B[x ; \sigma , \delta]$.
A lot of work has been devoted to the
study of the connection between the ideal structure of
$A$ and $B$ in this situation
(see e.g. \cite{cozzens75}, \cite{goodearl82}, 
\cite{jacobson37}, \cite{jordan75} and \cite{oinricsil11}). 
In \cite{oinricsil11} Öinert, Richter and Silvestrov
show that there are simplicity results
for differential polynomial rings 
that are almost completely analogous
to the skew group ring situation
(for more details, see Section \ref{oreextensions}).

\begin{theorem}[Öinert, Richter and Silvestrov \cite{oinricsil11}]\label{oinricsilmaximalcommutative}
Suppose that $B$ is an associative and unital ring 
and $A=B[x;\delta]$ is a differential polynomial ring.
{\rm (a)} If $B$ is a $\delta$-simple and
maximal commutative subring of the differential polynomial ring $A$,
then $A$ is simple;
{\rm (b)} The ring $A$ is simple if and only if
$B$ is $\delta$-simple and
$Z(A)$ is a field.
\end{theorem}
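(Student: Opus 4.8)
The plan is to prove both parts by a classical degree argument in the variable $x$, paralleling the skew group ring situation of Theorem \ref{oinertAabelian}, using the commutation rule $xb = bx + \delta(b)$ for $b \in B$ and its iterate $x^i b = \sum_{k=0}^{i} \binom{i}{k} \delta^{i-k}(b)\, x^k$.

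For part (a), since a maximal commutative subring is in particular commutative, $B$ is commutative and coincides with its own centralizer $C_A(B)$ in $A$. Let $I$ be a non-zero ideal of $A$ and pick $0 \neq a = \sum_{i=0}^{n} b_i x^i \in I$ with $b_n \neq 0$ of minimal degree $n$. For each $b \in B$ the commutator $ab - ba$ again lies in $I$, and commutativity of $B$ makes its $x^n$-coefficient $b_n b - b b_n$ vanish, so $\deg(ab - ba) < n$, and minimality forces $ab - ba = 0$. Hence $a$ commutes with all of $B$, i.e. $a \in C_A(B) = B$, so $n = 0$ and $0 \neq a \in I \cap B$. Now $I \cap B$ is an ideal of $B$, and from $xa - ax = \delta(a) \in I \cap B$ it is $\delta$-invariant; $\delta$-simplicity then gives $I \cap B = B$, so $1 \in I$ and $I = A$.

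For the ``if'' direction of part (b) I would drop commutativity and instead track leading coefficients. Let $I$ be a non-zero ideal, let $n$ be the minimal degree of a non-zero element of $I$, and let $L \subseteq B$ be the set of leading ($x^n$-)coefficients of degree-$n$ elements of $I$, together with $0$. Left and right multiplication by elements of $B$, together with the identity $xa - ax = \sum_i \delta(b_i) x^i$, show that $L$ is a $\delta$-invariant two-sided ideal of $B$; since $L \neq 0$, $\delta$-simplicity yields $L = B$, producing a monic element $a = x^n + \sum_{i<n} c_i x^i \in I$. For every $b \in B$ both $ab - ba$ and $xa - ax$ have degree $< n$ and lie in $I$, so by minimality they vanish; thus $a$ commutes with $B$ and with $x$, whence $a \in Z(A)$. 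As $Z(A)$ is a field and $a \neq 0$, $a$ is invertible, so $1 = a a^{-1} \in I$ and $I = A$.

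For the ``only if'' direction, assume $A$ is simple. That $Z(A)$ is a field is the standard fact that the center of a simple associative unital ring is a field. For $\delta$-simplicity, if $J$ were a non-zero proper $\delta$-invariant ideal of $B$, then $\delta$-invariance together with the commutation formula shows that $J[x;\delta]$, the set of polynomials with all coefficients in $J$, is a two-sided ideal of $A$ that is non-zero and proper (its intersection with $B$ is $J \neq B$), contradicting simplicity; hence $B$ is $\delta$-simple. The main obstacle is the ``if'' direction of (b): one must verify that $L$ is genuinely a $\delta$-invariant ideal and, crucially, recognize that the monic minimal-degree element is forced to be central, which is precisely what allows $Z(A)$ being a field to be brought to bear. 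The delicate point throughout is the noncommutative bookkeeping in the leading-coefficient and commutator computations governed by the binomial-type formula above.
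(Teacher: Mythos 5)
Your proof is correct and is in substance the same argument the paper gives: the paper routes part (a) and the sufficiency in part (b) through its degree-map framework (Theorem \ref{secondtheorem}, Lemma \ref{thirdtheorem} and Propositions \ref{oreidealABideal} and \ref{degreemapA/Z(B)}), and unwinding that machinery yields exactly your minimal-degree commutator reduction, the $\delta$-invariance of $I \cap B$ via $\delta(b) = xb - bx$, and the central monic minimal-degree element made invertible by $Z(A)$ being a field. The only steps you carry out directly that the paper delegates are the leading-coefficient ideal argument producing the monic element (the paper cites Lemma 4.13 of \cite{oinricsil11} inside Proposition \ref{degreemapA/Z(B)}(b)) and, for necessity, the ideal $J[x;\delta]$, which coincides with the ideal $IA$ furnished by Theorem \ref{firsttheorem} together with Proposition \ref{oreidealABideal}.
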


The third part of our motivation 
comes from non-associative ring theory.
Namely, starting with the real numbers, using
the classical Cayley-Dickson doubling procedure
(see Section \ref{applicationcayley}),
we get successively, 
the complex numbers,
Hamilton's quaternions,
the octonions, 
the sedenions and so on. 
Although none of these rings, from
the octonions on, are associative,
it is well known that they are all simple.
Furthermore, matrix algebras over such rings
are also simple but, of course, in general not associative.

In this article, we unify all of the three types
of simplicity results for ring extensions $A/B$
mentioned above, by introducing the notion of {\it $A$-simplicity} of $B$
(see Definition \ref{defAinvariant}). 
It turns out that for crossed product algebras
and Ore extensions, $A$-simplicity coincides
with respectively $G$-simplicity 
(see Proposition \ref{crossedproductGinvariant})
and $\sigma$-$\delta$-simplicity
(see Proposition \ref{oreidealABideal}).
We also introduce a weak form of
associativity that we call
{\it ideal associativity} 
(see Definition \ref{defidealassociative}).
We show that $A$-simplicity of $B$
is often a necessary condition
for simplicity of $A$. 

\begin{theorem}\label{firsttheorem}
If $A/B$ is an ideal associative ring extension satisfying
{\rm (i)} $B$ is a direct summand 
of $A$ as a left (right) $B$-module, 
{\rm (ii)} every ideal of $B$ has the identity property
as a right (left) $B$-module and 
{\rm (iii)} $A$ is simple, 
then $B$ is $A$-simple.
\end{theorem}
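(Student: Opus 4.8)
The plan is to argue by contraposition. Assuming $B$ is \emph{not} $A$-simple, I will manufacture a non-trivial ideal of $A$, contradicting hypothesis (iii). So suppose $I$ is an ideal of $B$ with $\{0\} \neq I \neq B$ and $AI \subseteq IA$, i.e.\ $I$ is $A$-invariant in the sense of Definition~\ref{defAinvariant}. I treat hypotheses (i) and (ii) in their first reading (left summand, right identity property); the parenthetical reading is handled by the symmetric left--right dual. The candidate ideal I propose is $J := IA$, the additive subgroup of $A$ spanned by all products $ia$ with $i \in I$ and $a \in A$.

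First I would verify that $J$ is a two-sided ideal of $A$, and I expect this to be the crux of the argument: promoting the naive product set $IA$ to an honest ideal in the \emph{absence} of global associativity is exactly what forces the use of ideal associativity (Definition~\ref{defidealassociative}) together with the $A$-invariance $AI \subseteq IA$. Closure under addition is immediate. For right absorption, ideal associativity lets me rewrite $(ia)a' = i(aa')$, so that $(IA)A \subseteq IA$. For left absorption I combine the two hypotheses on $I$: ideal associativity gives $a(ia') = (ai)a'$, and since $ai \in AI \subseteq IA$ I may write $ai = \sum_k i_k a_k$ with $i_k \in I$; reassociating each summand via ideal associativity yields $(ai)a' = \sum_k i_k(a_k a') \in IA$. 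Hence $A(IA) \subseteq IA$, and $J$ is a two-sided ideal.

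Next I would show $J$ is non-trivial, and this is where conditions (i) and (ii) enter. For non-vanishing: $I$ is an ideal of $B$, so $IB \subseteq I$, and the identity property of $I$ as a right $B$-module (hypothesis (ii)) upgrades this to $I = IB \subseteq IA = J$; thus $J \supseteq I \neq \{0\}$. For properness I use hypothesis (i): writing $A = B \oplus M$ as left $B$-modules produces a left $B$-linear projection $\pi \colon A \to B$ with $\pi|_B = \id_B$, so in particular $\pi(A) = B$. For $i \in I \subseteq B$ and $a \in A$, left $B$-linearity gives $\pi(ia) = i\,\pi(a) \in IB \subseteq I$, whence $\pi(J) \subseteq I \subsetneq B = \pi(A)$. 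Were $J = A$ we would get $\pi(J) = B$, a contradiction, so $J \neq A$.

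Therefore $J$ is a non-trivial ideal of $A$, contradicting the simplicity assumed in (iii). Consequently no such $I$ can exist, and $B$ is $A$-simple. The remaining case of the hypotheses (right summand in (i), left identity property in (ii)) is proved by interchanging the roles of left and right throughout: one uses a right $B$-linear projection to establish properness and the left-module identity property $I = BI$ to establish non-vanishing, with the invariance condition and the associativity reassociations applied on the corresponding side.
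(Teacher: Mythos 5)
Your proof is correct and takes essentially the same route as the paper, which factors the argument through Propositions \ref{idealassociative} and \ref{firstprop}: there, $IA$ is shown to be an ideal of $A$ via exactly your combination of ideal associativity and $A$-invariance, and non-triviality follows from the computation $B \cap IA = B \cap (I \oplus IX) = I$ using $A = B \oplus X$ and $IB = I$. Your left $B$-linear projection $\pi$ along the complement is just that intersection computation in disguise (with the minor economy that you only need $\pi(IA) \subseteq I$ rather than equality), so there is no substantive difference.
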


Many of the proofs of the sufficient conditions for 
simplicity of $A$ are often based on some 
kind of reduction argument on the number
of homogeneous components that are present
in elements of ideals.
We show that many of these arguments can be treated
using a notion that we call a {\it degree map}
(see Definition \ref{defdegreemap})
for the extension $A/B$.

\begin{theorem}\label{secondtheorem}
If $A/B$ is a ring extension satisfying
{\rm (i)} $C_A(B)$ is a ring which is $A$-simple,
{\rm (ii)} every intersection of $C_A(B)$
with an ideal of $A$ is $A$-invariant,
{\rm (iii)} $A C_A(B) A = A$ and 
{\rm (iv)} there is a degree map for $A/B$,
then $A$ is simple.
\end{theorem}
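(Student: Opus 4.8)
The plan is to show that any non-zero ideal $I$ of $A$ must equal $A$. Let me think about what tools I have and how they fit together.

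I have a degree map $d$ for $A/B$ with a generating set $X \subseteq B$. The key property (d2) lets me, given a non-zero element of an ideal, produce another non-zero element whose commutators with elements of $X$ have strictly smaller degree. So a natural strategy is a descent/minimality argument on the degree.

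Let me think about the centralizer $C = C_A(B)$. The hypotheses tell me $C$ is $A$-simple, intersections $C \cap I$ are $A$-invariant, and $ACA = A$.

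Here's my approach. Take a non-zero ideal $I$ of $A$. I want to show $I \cap C \neq 0$. Among all non-zero elements of $I$, pick one, say $a$, of minimal degree $d(a)$ (this is possible since degrees are non-negative integers). By (d2), there is a non-zero $a' \in I$ with $d(a') \leq d(a)$ and $d(a'b - ba') < d(a)$ for all $b \in X$. Now $a'b - ba'$ lies in $I$ (since $I$ is an ideal), and its degree is strictly less than $d(a)$, the minimal degree. By minimality this forces $a'b - ba' = 0$ for all $b \in X$. Since $X$ generates $B$ as a ring, $a'$ commutes with all of $B$, so $a' \in C$. Thus $0 \neq a' \in I \cap C$.

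So the descent argument gives me $I \cap C \neq 0$. Now I invoke the hypotheses on $C$. By (ii), $I \cap C$ is an $A$-invariant ideal of $C$, and it is non-zero. Since $C$ is $A$-simple (i), the only $A$-invariant ideals are trivial, so $I \cap C = C$, i.e. $C \subseteq I$. Finally, because $I$ is an ideal of $A$, $ACA \subseteq I$; and $ACA = A$ by (iii), so $A \subseteq I$, giving $I = A$. Hence $A$ is simple.

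Let me now think about the main obstacles and the steps I would need to verify carefully. The descent argument producing $a' \in C$ is the heart, and it rests on translating "commutes with every generator in $X$" into "lies in the centralizer $C_A(B)$": this uses that $X$ generates $B$ as a ring, and one must check that commuting with a generating set implies commuting with the whole ring even in the non-associative, non-unital setting, which requires a small inductive argument on products and sums. The other delicate point is that $I \cap C$ should be an ideal of $C$ (so that $A$-simplicity applies to it) — this is presumably why hypothesis (ii) is phrased in terms of $A$-invariance rather than merely claiming $I \cap C$ is an ideal, and I would need to confirm that an $A$-invariant subset that is non-zero is ruled out being proper by the definition of $A$-simplicity. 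I expect the main conceptual obstacle is exactly the interplay in the last step: ensuring that the non-associativity does not break $ACA \subseteq I$, which should follow from $I$ being a two-sided ideal but deserves an explicit check.
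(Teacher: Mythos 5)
Your proposal is correct and follows essentially the same route as the paper: your minimal-degree descent producing a non-zero $a' \in I \cap C_A(B)$ is exactly the paper's Lemma~\ref{thirdtheorem} (the ideal intersection property of $A/C_A(B)$), and the remainder --- hypothesis (ii) plus $A$-simplicity forcing $C_A(B) \subseteq I$, then $A = A\,C_A(B)\,A \subseteq AIA \subseteq I$ --- matches the paper's proof of Theorem~\ref{secondtheorem} verbatim. The caveats you flag (commuting with the generating set $X$ versus all of $B$ in the non-associative setting, and $I \cap C_A(B)$ being an ideal of $C_A(B)$) are handled no more explicitly in the paper than in your write-up, so nothing is missing relative to the paper's own argument.
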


In Section \ref{mainresults},
we define the relevant notions concerning
ring extensions that will be used throughout this article
and we show Theorem \ref{firsttheorem} and Theorem \ref{secondtheorem}.
In Section \ref{categorygradedrings},
we apply Theorem \ref{firsttheorem}
and Theorem \ref{secondtheorem} to
category graded rings.
In particular, we show that there are
versions of Theorem \ref{oinertAabelian}
that hold for 
strongly groupoid graded rings. 
In Section \ref{crossedproductalgebras},
we utilize results from Section \ref{categorygradedrings}
in order to analyze simplicity of
category crossed products.
In Sections \ref{applicationcayley}--\ref{oreextensions}, 
we apply the results from the previous sections to
obtain simplicity results for, 
respectively, Cayley extensions, twisted and skew matrix algebras
over non-associative rings,
skew group rings over
non-associative algebras associated with topological dynamical systems, and Ore extensions.

\section{Preliminaries and Proofs of the Main Results}\label{mainresults}

In this section, we fix the relevant notions 
concerning ring extensions that we will
use in the sequel and we also prove Theorem
\ref{firsttheorem} and Theorem \ref{secondtheorem}. 
To this end, we show  
three results (see Proposition \ref{idealassociative},
Proposition \ref{firstprop} and Lemma \ref{thirdtheorem}) concerning
ideals of ring extensions.

Let $A/B$ be a ring extension and suppose that
$X$, $Y$ and $Z$ are subsets of $A$.
We let $XY$ denote the set of all finite sums
of elements of the form $xy$, for $x \in X$
and $y \in Y$.
Furthermore, we let $XYZ$ denote the 
set of all finite sums of elements 
of the form $x(yz) + (x'y')z'$
for $x,x' \in X$, $y,y' \in Y$ and $z,z' \in Z$.
If $A$ is unital, then the multiplicative
identity element of $A$ is denoted by $1_A$.
Recall that 
the \emph{centralizer}
of $B$ in $A$, denoted by $C_A(B)$, is the
set of elements in $A$ that commute
with every element in $B$.
If $C_A(B)=B$, then $B$ is said to be a {\it maximal commutative} subring of $A$.
The set $C_A(A)$ of $A$ is called the
{\it center} of $A$ and is denoted by $Z(A)$.
We say that a left (or right) $B$-module $X$
has the {\it identity property} if
$BX=X$ (or $XB=X$).
By an \emph{ideal} we always mean a two-sided ideal.
The pre-ordered set of ideals of $A$
is denoted by $\ideal(A)$.

\begin{definition}\label{defAinvariant}
We say that an ideal $I$ of $B$
is {\it $A$-invariant} if $AI \subseteq IA$.
We say that $A/B$ is $A$-{\it simple} if  
there is no non-trivial
$A$-invariant ideal $I$ of $B$. 
We let the collection of $A$-invariant ideals
of $B$ be denoted by $\ideal_A(B)$.
\end{definition}

\begin{definition}\label{defidealassociative}
We say that $A/B$ is {\it ideal associative} 
if any ideal $I$ of $B$
associates with any finite collection of copies of $A$.
So for instance using two copies of $A$
we get that $(IA)A = I(AA)$, $(AI)A = A(IA)$
and $(AA)I = A(AI)$. 
Clearly, if $A$ is associative
then $A/B$ is necessarily ideal associative.
\end{definition}

\begin{definition}\label{defdegreemap}
We say that a function $d$ from $A$ to the
set of non-negative integers is a
{\it degree map} for $A/B$ if it satisfies the following two conditions:
\begin{itemize}
	\item[{\rm (d1)}] if $a \in A$, then 
$d(a) = 0$ if and only if $a=0$;
	\item[{\rm (d2)}] there is a subset $X$ of $B$ generating
$B$ as a ring, such that for each non-zero ideal $I$
of $A$ and each non-zero $a \in I$
there is a non-zero $a' \in I$ satisfying $d(a') \leq d(a)$
and $d(a'b - ba') < d(a)$ for all $b \in X$.
\end{itemize}
\end{definition}

\begin{proposition}\label{idealassociative}
If $A/B$ is an ideal associative ring extension, 
then there is a map of pre-ordered sets
$i : \ideal_A(B) \rightarrow \ideal(A)$,
defined by $i(I)=IA$, 
for all $I \in \ideal_A(B)$.
\end{proposition}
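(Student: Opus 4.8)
The plan is to verify the two things packaged into the statement: that $i$ is well defined, meaning $IA$ is genuinely a (two-sided) ideal of $A$ for every $A$-invariant ideal $I$ of $B$, and that $i$ respects the pre-orders, which on both $\ideal_A(B)$ and $\ideal(A)$ are given by inclusion.

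First I would dispose of the additive structure. Since $IA$ is by definition the set of all finite sums $\sum_k x_k y_k$ with $x_k \in I$ and $y_k \in A$, it is closed under addition, and because $I$ is in particular an additive subgroup of $B$, the identity $-(xy) = (-x)y$ shows $IA$ is closed under negation. Hence $IA$ is an additive subgroup of $A$.

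The core of the argument is the two absorption properties, and this is exactly where ideal associativity must be invoked with care, since in the non-associative setting the sets $(IA)A$ and $I(AA)$ need not coincide a priori. For the right absorption, ideal associativity gives $(IA)A = I(AA)$, and since $A$ is closed under multiplication we have $AA \subseteq A$, whence $I(AA) \subseteq IA$; thus $(IA)A \subseteq IA$. For the left absorption I would use the $A$-invariance of $I$: ideal associativity gives $A(IA) = (AI)A$, the hypothesis $AI \subseteq IA$ gives $(AI)A \subseteq (IA)A$, and the right absorption just established gives $(IA)A \subseteq IA$; chaining these yields $A(IA) \subseteq IA$. Together with the additive subgroup property, this shows $IA \in \ideal(A)$, so $i$ is well defined.

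Finally, monotonicity is immediate: if $I \subseteq J$ with $I, J \in \ideal_A(B)$, then every generating product $xy$ with $x \in I \subseteq J$ and $y \in A$ lies in $JA$, so $IA \subseteq JA$, i.e.\ $i(I) \subseteq i(J)$. The one point demanding genuine attention is the bookkeeping of parentheses in the preceding paragraph: the whole role of the ideal associativity hypothesis is to license the rewrites $(IA)A = I(AA)$ and $A(IA) = (AI)A$, and without it neither absorption property would follow, so I expect that to be the only real obstacle.
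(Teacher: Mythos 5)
Your proof is correct and follows essentially the same route as the paper's: the two absorption chains $(IA)A = I(AA) \subseteq IA$ and $A(IA) = (AI)A \subseteq (IA)A \subseteq IA$, using ideal associativity for the rewrites and $A$-invariance $AI \subseteq IA$ for the left absorption, are exactly the paper's argument. Your explicit checks of the additive subgroup property and monotonicity are routine points the paper leaves implicit, so nothing differs in substance.
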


\begin{proof}
All that we need to show is that $i$ is well defined.
Take an $A$-invariant ideal $I$ of $B$.
Then $A(IA) = (AI)A \subseteq (IA)A = I(AA)
\subseteq IA$ and
$(IA)A = I(AA) \subseteq IA$. 
Therefore, $IA$ is an ideal of $A$.
\end{proof}

\begin{proposition}\label{firstprop}
If $A/B$ is an ideal associative 
ring extension satisfying
\begin{itemize}
\item[{\rm (i)}] $B$ is a direct summand 
of $A$ as a left (right) $B$-module and 

\item[{\rm (ii)}] every ideal of $B$ has the identity property
as a right (left) $B$-module, 

\end{itemize}
then $i : \ideal_A(B) \rightarrow \ideal(A)$ is
an injective map of pre-ordered sets.
\end{proposition}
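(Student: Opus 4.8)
The plan is to recover the ideal $I$ from its image $IA$ by means of the projection onto $B$ that is furnished by condition (i), and then to use condition (ii) to identify this projection with $I$ itself. Since Proposition \ref{idealassociative} already guarantees that $i$ is a well-defined order-preserving map of pre-ordered sets, the only thing left to establish is injectivity, that is, that $IA = JA$ implies $I = J$ for all $I, J \in \ideal_A(B)$. I would treat the version in which $B$ is a direct summand as a left $B$-module, the right-module case being entirely symmetric: write $A = B \oplus M$ as left $B$-modules, with $M$ a left $B$-submodule, and let $\pi \colon A \to B$ be the associated projection along $M$, which is a homomorphism of abelian groups satisfying $\pi|_B = \id_B$ and $\pi(A) = B$.

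The key computation is $\pi(IA) = IB$. Given $i \in I$ and $a \in A$, I would decompose $a = b + m$ with $b \in B$ and $m \in M$, so that left distributivity gives $ia = ib + im$. Because $I$ is an ideal of $B$ we have $ib \in IB \subseteq I \subseteq B$, and because $M$ is a left $B$-submodule and $i \in B$ we have $im \in BM \subseteq M$; hence $ib + im$ is precisely the decomposition of $ia$ into its $B$- and $M$-components, so that $\pi(ia) = ib$. As $a$ ranges over $A$ its component $b = \pi(a)$ ranges over all of $B$, and applying additivity of $\pi$ to the finite sums defining $IA$ then yields $\pi(IA) = IB$. By condition (ii) every ideal of $B$ has the identity property as a right $B$-module, so $IB = I$, and therefore $\pi(IA) = I$.

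With this in hand the conclusion is immediate: if $IA = JA$, then applying $\pi$ gives $I = \pi(IA) = \pi(JA) = J$, which is the desired injectivity. I do not expect a genuine obstacle here; the only points that require care are bookkeeping ones. First, one must verify that no associativity of $A$ is invoked — indeed only left distributivity and the left $B$-module structure of $M$ are used, so the argument survives in the non-associative setting. Second, one should note that condition (ii) is genuinely essential: without it one can only conclude $\pi(IA) = IB$, and the step from $IB = JB$ back to $I = J$ would fail. Finally, in the right-module version one replaces $\pi$ by the projection coming from a right $B$-module splitting and uses $BI = I$ in place of $IB = I$, the computation being the mirror image of the above.
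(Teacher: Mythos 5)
Your proof is correct and takes essentially the same route as the paper: both rest on the splitting $A = B \oplus M$ and the computation $IA = IB \oplus IM = I \oplus IM$ (with condition (ii) giving $IB = I$ and $IM \subseteq BM \subseteq M$), from which $I$ is recovered from $IA$. The only cosmetic difference is that the paper recovers $I$ as the intersection $B \cap IA$, exhibiting a left inverse $p \circ i = \id_{\ideal_A(B)}$ with $p(Y) = B \cap Y$, whereas you apply the projection $\pi$ along $M$; since $IM \subseteq M$, the two yield the same set.
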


\begin{proof}
We show the left-right part of the claim.
The right-left part of the claim is shown
in a completely analogous way and is therefore
left to the reader.
Let the set of additive subgroups of $A$
be denoted by $S(A)$.
Let $p$ denote the map
$S(A) \ni Y \mapsto B \cap Y \in S(B)$.
We will show that $p \circ i = \id_{\ideal_A(B)}$,
which, in particular, implies injectivity of $i$.
To this end, take an $A$-invariant ideal $I$ of $B$
and a left $B$-bimodule $X$ such that
$A = B \oplus X$ as left $B$-modules.
Since $I$ has the identity property
as a right $B$-module we get that
$(p \circ i)(I) = B \cap IA =
B \cap \left( IB \oplus IX \right) =
B \cap \left( I \oplus IX \right) = I$.
\end{proof}

\noindent {\bf Proof of Theorem \ref{firsttheorem}.}
This follows 
from Proposition \ref{firstprop}.
\hfill $\qed$

\begin{definition}[\cite{oinlun10}]
$A/B$ is said to have the
{\it ideal intersection property}
if every non-zero ideal of $A$ has 
non-zero intersection with $B$.
\end{definition}

\begin{lemma}\label{thirdtheorem}
If $A/B$ is a ring extension
equipped with a degree map,
then $A / C_A(B)$ has the ideal
intersection property.
\end{lemma}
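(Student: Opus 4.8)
The plan is to prove that every non-zero ideal $I$ of $A$ satisfies $I \cap C_A(B) \neq 0$, by exhibiting a non-zero element of $I$ that commutes with every element of $B$. The degree map gives me exactly the tool for this: it lets me select, inside any non-zero ideal, an element of minimal complexity relative to the generating set $X$ of $B$, and then argue that minimality forces the element into the centralizer.

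First I would take a non-zero ideal $I$ of $A$ and consider the set of values $\{d(a) : a \in I,\ a \neq 0\}$, a non-empty subset of the positive integers by condition (d1). Let $n$ be its minimum, and fix a non-zero $a \in I$ with $d(a) = n$. Now I invoke condition (d2): there is a non-zero $a' \in I$ with $d(a') \leq d(a) = n$ and $d(a'b - ba') < d(a) = n$ for all $b \in X$. By minimality of $n$, we have $d(a') = n$ (it cannot be smaller), so $a'$ is another minimal-degree element; the key point is the strict inequality $d(a'b - ba') < n$. Since $a' \in I$ and $I$ is an ideal, each commutator $a'b - ba'$ lies in $I$ as well (here $a'b, ba' \in A$ because $b \in X \subseteq B \subseteq A$, and $I$ absorbs multiplication from $A$ on both sides). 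Thus $a'b - ba'$ is an element of $I$ whose degree is strictly less than the minimum $n$. By the defining property of $n$ as the minimum over non-zero elements, this is impossible unless $a'b - ba' = 0$, that is, unless $a'b = ba'$ for all $b \in X$.

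At this point I would upgrade the commuting relation from the generating set $X$ to all of $B$. Since $X$ generates $B$ as a ring and $a'$ commutes with every element of $X$, I want to conclude that $a'$ commutes with every element of $B$, i.e. $a' \in C_A(B)$. The element $a'$ is then a non-zero element lying in $I \cap C_A(B)$, which is exactly what we need to establish the ideal intersection property for $A/C_A(B)$.

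The step I expect to be the main obstacle is precisely this last upgrade from $X$ to $B$ in the non-associative setting. For associative rings, the centralizer of a generating set is automatically a subring containing the generated ring, so commuting with $X$ forces commuting with all ring-words in $X$, hence all of $B$. Without associativity, however, one must check that the set of elements commuting with a fixed $a'$ is closed under the ring operations used to build $B$ from $X$, and products of elements commuting with $a'$ need not commute with $a'$ in a non-associative ring. I would therefore need to verify carefully that the condition $a' \in C_A(\{b\})$ propagates through sums and products of the generators, possibly relying on an implicit associativity or commutativity hypothesis relating $a'$, $b$, and products of elements of $X$; if this propagation fails in full generality, the honest statement is that $a'$ commutes with the ring generated by $X$, which is $B$, so I would expect the intended argument to use that the commuting condition extends multiplicatively for the specific elements involved.
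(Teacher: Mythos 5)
Your argument is exactly the paper's proof: choose $a \in I$ with $d(a) = \min\, d(I \setminus \{0\})$, apply (d2) to obtain a non-zero $a' \in I$ with $d(a'b - ba') < d(a)$ for all $b \in X$, note that $a'b - ba' \in I$, and conclude from minimality together with (d1) that $a'b = ba'$ for all $b \in X$, whence $a'$ is a non-zero element of $I \cap C_A(B)$ once the commuting relation passes from $X$ to $B$. The obstacle you flag in that last step is genuine, and it is worth noting that the paper's own proof does not address it either: it simply asserts ``since $X$ generates $B$ as a ring, we can conclude that $a' \in C_A(B)$'', which implicitly uses that the centralizer of the fixed element $a'$ is a subring of $A$ --- true under associativity, but in a non-associative ring the set of elements commuting with $a'$ is closed under sums and differences, not under products, so commuting with a multiplicatively generating set need not give commuting with all of $B$. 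So your proposal is, if anything, more careful than the paper at the only delicate point. The gap is harmless in every use the paper makes of the lemma, because the generating sets are benign: in Proposition \ref{maximalcommutativedegreemap} and Proposition \ref{degreemapA/Z(B)}(a) one takes $X$ to be all of $B$, in Proposition \ref{abeliandegreemap} the set $X$ of homogeneous elements spans $B$ \emph{additively}, so only distributivity is needed, and in Proposition \ref{degreemapA/Z(B)}(b) the ambient ring is associative. A clean repair, which all the paper's degree maps satisfy, is to require in (d2) that $X$ generate $B$ as an additive group, or to assume $A$ associative.
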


\begin{proof}
Let $I$ be a non-zero ideal of $A$.
Take $a \in I$ with 
$d(a) = {\rm min}[ d(I \setminus \{ 0 \}) ]$.
By (d2) there is a non-zero $a' \in I$ 
with $d(a') \leq d(a)$ and
$d(a'b-ba') < d(a)$ for all $b \in X$. 
Since $a'b-ba' \in I$ it follows from the assumptions
on $d(a)$ that $a'b = ba'$ for all $b \in X$.
Since $X$ generates $B$ as a ring, we can 
conclude that $a' \in C_A(B)$.
\end{proof}

\noindent {\bf Proof of Theorem \ref{secondtheorem}.}
Suppose that $A/B$ is a ring extension
satisfying (i)-(iv)
in the formulation of Theorem \ref{secondtheorem}.
Let $I$ be a non-zero ideal of $A$.
From Lemma \ref{thirdtheorem} it follows that
$I \cap C_A(B)$ is a non-zero
$A$-invariant ideal of $C_A(B)$, which,
from $A$-simplicity of $C_A(B)$,
implies that $I \cap C_A(B) = C_A(B)$,
i.e. that $C_A(B) \subseteq I$.
Therefore $A = A C_A(B) A \subseteq 
A I A \subseteq I$ and hence $A = I$.
This shows that $A$ is simple. 
\hfill $\qed$

\section{Applications}

In the Sections \ref{categorygradedrings}-\ref{oreextensions} below,
we demonstrate how the notion of a degree map
can be useful in the study of various classes of ring extensions.

\subsection{Category Graded Rings}\label{categorygradedrings}

In this section, we apply Theorem
\ref{firsttheorem} and Theorem \ref{secondtheorem} 
to category graded rings.
Thereby, we obtain both necessary
(see Proposition \ref{Rsimpleimplies}) and sufficient
(see Proposition \ref{propgroupoidsimple})
conditions for such rings to be simple.

\begin{definition}
Let $G$ be a category.
The family of objects of $G$ is denoted by $\ob(G)$.
We will often identify an object of $G$ with
its associated identity morphism.
The family of morphisms in $G$ is denoted by $\mor(G)$.
Throughout this article $G$ is assumed to be small, that is
with the property that $\mor(G)$ is a set.
The domain and codomain of a morphism $g$ in $G$ is denoted by
$d(g)$ and $c(g)$ respectively.
We let $G^{(2)}$ denote the collection of composable
pairs of morphisms in $G$, that is all $(g,h)$ in
$\mor(G) \times \mor(G)$ satisfying $d(g)=c(h)$.
We will often view a group as a one-object
category.
\end{definition}

\begin{definition}
We follow the notation from 
\cite{lu06}, \cite{oinlun10} and \cite{oinlunMiyashita}.
Let $A$ be a ring.
A $G$-\emph{filter} on $A$ is a set of additive subgroups,
$\{A_g\}_{g \in \mor(G)}$, of $A$ such that for all 
$g ,h \in \mor(G)$, we have
$A_g A_h \subseteq A_{gh}$, if $(g,h) \in G^{(2)}$, 
and $A_g A_h = \{ 0 \}$ otherwise.
The ring $A$ is called $G$-\emph{graded}
if there is a $G$-filter, $\{A_g\}_{g \in \mor(G)}$ on $A$
such that $A = \oplus_{g \in \mor(G)} A_g$.
Let $A$ be a ring which is graded by a category $G$.
We say that an ideal $I$ of $A$
is \emph{graded} if $I = \oplus_{g \in \mor(G)} (I \cap A_g)$.
In that case we put $I_g = I \cap A_g$, for $g \in \mor(G)$.
We say that $A$ is \emph{locally unital} if for every
$e \in \ob(G)$ the ring $A_e$ is non-zero and unital, making
every $A_g$, for $g \in \mor(G)$, a unital
$A_{c(g)}$-$A_{d(g)}$-bimodule.
Each $a\in A$ can be written as
$a = \sum_{g\in G} r_g$ in a unique way using $r_g \in A_g$, for $g\in G$, 
of which all but finitely many are zero.
For $g\in G$, we let $(a)_g$ denote $r_g$.
The $G$-gradation on $A$ is said to be \emph{right non-degenerate}
(respectively \emph{left non-degenerate}) if to each
isomorphism $g \in \mor(G)$ and each non-zero $x \in A_g$,
the set $x A_{g^{-1}}$ (respectively $A_{g^{-1}}x$) is non-zero.
If $a \in A$ and $a = \sum_{g \in \mor(G)} a_g$
with $a_g \in A_g$, for $g \in \mor(G)$,
then we let $\Supp(a)$ denote the (finite) set of all $g \in \mor(G)$
with $a_g \neq 0$.
For a subset $H$ of $\mor(G)$, let
$A_H$ denote the direct sum $\oplus_{h \in H} A_h$.
Let $A_0$ denote $A_{\ob(G)}$.
\end{definition}

\begin{proposition}\label{Rsimpleimplies}
If $A$ is a simple ring graded by a 
category $G$ such that $A/A_0$ is ideal associative and
$A$ is locally unital, 
then $A_0$ is $A$-simple.
\end{proposition}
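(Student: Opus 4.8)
The plan is to deduce this from Theorem~\ref{firsttheorem}, since the hypotheses of Proposition~\ref{Rsimpleimplies} closely match conditions (i)--(iii) of that theorem with $B = A_0$. Theorem~\ref{firsttheorem} states that if $A/B$ is ideal associative, $B$ is a direct summand of $A$ as a one-sided $B$-module, every ideal of $B$ has the identity property as a one-sided $B$-module, and $A$ is simple, then $B$ is $A$-simple. Here ideal associativity of $A/A_0$ and simplicity of $A$ are given outright, so the real work is to extract the remaining two hypotheses from local unitality of the $G$-graded structure.

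First I would establish that $A_0$ is a direct summand of $A$ as a (say) left $A_0$-module. The natural complement is $X = A_H$, where $H = \mor(G) \setminus \ob(G)$ is the set of non-identity morphisms; then $A = A_0 \oplus A_H$ as additive groups by the definition of a $G$-gradation, and I would check that both summands are left $A_0$-modules. This uses the filter relations: for $e \in \ob(G)$ and $g \in \mor(G)$ we have $A_e A_g \subseteq A_{eg}$ when $(e,g)$ is composable (and $=\{0\}$ otherwise), and $eg$ is an identity precisely when $g$ is, so $A_0 A_0 \subseteq A_0$ and $A_0 A_H \subseteq A_H$. Hence the decomposition respects the left $A_0$-action, giving condition (i).

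Next I would verify condition (ii): every ideal $I$ of $A_0$ satisfies $I A_0 = I$ (the identity property as a right $A_0$-module). This is where local unitality enters essentially. Because $A$ is locally unital, each $A_e$ for $e \in \ob(G)$ is unital, and $A_0 = \oplus_{e \in \ob(G)} A_e$ is a direct sum of unital rings with orthogonal identities $\{1_{A_e}\}$. For any element $x \in A_e \subseteq A_0$ we have $x = x \cdot 1_{A_e} \in I A_0$ whenever $x \in I$, so an arbitrary $y \in I$, decomposed along the objects, lies in $I A_0$; thus $I A_0 = I$. The main subtlety to handle carefully is that $A_0$ need not itself be unital when $\ob(G)$ is infinite, so one cannot simply multiply by a global identity --- the argument must proceed object-by-object using the local identities $1_{A_e}$ and the orthogonality coming from $A_e A_{e'} = \{0\}$ for distinct objects $e, e'$.

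With conditions (i), (ii), ideal associativity, and simplicity of $A$ all in hand, Theorem~\ref{firsttheorem} applies directly to the extension $A/A_0$ and yields that $A_0$ is $A$-simple, which is the desired conclusion. I expect the only real obstacle to be the careful treatment of the identity property in the non-unital case described above; verifying that the summands $A_0$ and $A_H$ are genuinely $A_0$-submodules is routine once the filter relations on identity morphisms are spelled out, and no degree map (and hence none of the machinery behind Theorem~\ref{secondtheorem}) is needed here.
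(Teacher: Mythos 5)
Your proposal is correct and is essentially the paper's own argument: the paper likewise reduces to Proposition~\ref{firstprop} (the engine behind Theorem~\ref{firsttheorem}) with $B = A_0$ and complement $C = \oplus_{g \in \mor(G) \setminus \ob(G)} A_g$, using local unitality to obtain the right identity property for ideals of $A_0$. Your write-up merely spells out the object-by-object verification with the orthogonal local identities $1_{A_e}$, which the paper leaves implicit.
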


\begin{proof}
This follows immediately from Proposition \ref{firstprop}.
In fact, if we put $B = A_0$ and
$C = \oplus_{g \in \mor(G) \setminus \ob(G)} A_g$,
then $A = B \oplus C$ as left $B$-modules and,
since $A$ is locally unital, every
ideal of $B$ has the right identity property.
\end{proof}

\begin{proposition}\label{maximalcommutativedegreemap}
If $A$ is a ring graded by a groupoid $G$
and the gradation on $A$ is left or right non-degenerate, 
then $A/Z(A_0)$ has a degree map. 
\end{proposition}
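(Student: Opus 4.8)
The plan is to take the support-counting function as the degree map. Define $d \colon A \to \mathbb{Z}_{\ge 0}$ by $d(a) = |\Supp(a)|$, and let $X = Z(A_0)$, which generates $B = Z(A_0)$ as a ring. Then (d1) holds trivially, since $\Supp(a) = \emptyset$ precisely when $a = 0$. All the work goes into (d2): given a non-zero ideal $I$ of $A$ and a non-zero $a \in I$, I must produce a non-zero $a' \in I$ with $d(a') \le d(a)$ and $d(a'b - ba') < d(a)$ for every $b \in Z(A_0)$.

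First I would record how central elements of $A_0$ interact with the grading. From the $G$-filter axioms, for $b \in Z(A_0)$ and a homogeneous $a'_h \in A_h$ only the components $b_{c(h)}$ and $b_{d(h)}$ survive, so that $b a'_h = b_{c(h)} a'_h$ and $a'_h b = a'_h b_{d(h)}$; hence $(a'b - ba')_h = a'_h b_{d(h)} - b_{c(h)} a'_h$ lies in $A_h$, and $\Supp(a'b - ba') \subseteq \Supp(a')$ for all $b$. The crucial observation is that this commutator vanishes on identity components: if $e_0 \in \Supp(a') \cap \ob(G)$, then $c(e_0) = d(e_0) = e_0$, and since $b \in Z(A_0)$ commutes with $a'_{e_0} \in A_0$, the $e_0$-component of $a'b - ba'$ is zero. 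Thus $\Supp(a'b - ba') \subseteq \Supp(a') \setminus \{e_0\}$, so any non-zero $a' \in I$ whose support meets $\ob(G)$ already fulfils the commutator requirement of (d2).

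It remains to manufacture, from an arbitrary non-zero $a \in I$, such an $a'$ without enlarging the support, and I expect this to be the main obstacle. Assuming right non-degeneracy, I would fix any $g_0 \in \Supp(a)$, use non-degeneracy to choose $y \in A_{g_0^{-1}}$ with $a_{g_0} y \neq 0$ (the inverse $g_0^{-1}$ exists as $G$ is a groupoid), and set $a' = ay \in I$. The delicate checks are that the support does not grow and that an identity component actually appears and survives: each $a_g y$ lies in $A_{g g_0^{-1}}$, and since $g \mapsto g g_0^{-1}$ is injective on the relevant morphisms, different $g$ cannot collapse to the same component or cancel. This yields $d(a') \le d(a)$, while the only contribution to $A_{g_0 g_0^{-1}} = A_{c(g_0)}$ is $a_{g_0} y \neq 0$, so $c(g_0) \in \Supp(a') \cap \ob(G)$. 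Combining with the previous step gives $d(a'b - ba') < d(a') \le d(a)$ for all $b \in Z(A_0)$, which is exactly (d2). The left non-degenerate case is entirely symmetric: one multiplies by $y$ on the left and uses the identity component $A_{d(g_0)}$.
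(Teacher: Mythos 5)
Your proposal is correct and takes essentially the same approach as the paper: the same degree map $d(a)=|\Supp(a)|$ with $X=Z(A_0)$, the same observation that commutators with central elements of $A_0$ annihilate identity components, and the same use of right (or left) non-degeneracy to multiply by an element of $A_{g_0^{-1}}$ and produce a non-zero identity component without enlarging the support. The only difference is cosmetic: the paper splits into two cases according to whether $\Supp(a)$ already meets $\ob(G)$, while you multiply uniformly and make explicit the injectivity of $g \mapsto g g_0^{-1}$ that the paper leaves implicit.
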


\begin{proof}
Suppose that the gradation on $A$ is right non-degenerate.
Since the claim holds trivially if $A_0 = \{ 0 \}$,
we can assume that $A_0 \neq \{ 0 \}$.
Put $B = Z(A_0)$.
For $a \in A$, define $d(a)$ to be the cardinality of $\Supp(a)$.
Condition (d1) holds trivially.
Now we show condition (d2).
Take a non-zero ideal $I$ of $A$ and
a non-zero $a \in I$.
Take $b \in B$.
We now consider two cases.
Case 1: there is $e \in \ob(G)$
with $a_e \neq 0$. Put $a'=a$.
By the definition of the gradation on $A$,
we get that $(a'b-ba')_e = (ab-ba)_e =
a_e b - b a_e = 0$. Therefore $d(a'b-ba') < d(a)$. 
Case 2: $a_e = 0$ for all $e \in \ob(G)$.
Take a non-identity $g \in G$ with $a_g \neq 0$.
By right non-degeneracy of the gradation, there is
$c \in A_{g^{-1}}$ with $(ac)_{c(g)} \neq 0$.
Since $d(ac) \leq d(a)$ we can put $a' = ac$ and use the
argument from Case 1. The left non-degeneracy 
case is treated analogously.
\end{proof}

\begin{definition}
If $A$ is a ring which is graded by 
a category $G$, then we
say that $A/A_0$ is {\it graded ideal associative}
if every ideal of $A_0$ associates
with any combination of graded components of $A$.
It is clear that graded ideal associativity of $A/A_0$
implies ideal associativity of $A/A_0$.
At present, it is not clear to the authors
whether or not the converse holds.
\end{definition}

\begin{proposition}\label{Ainvariantequivalent}
Suppose that $A$ is a ring graded by a category $G$
such that $A$ is locally unital.
If $I$ is an ideal of $A_0$, then

\begin{itemize}

\item[{\rm (a)}] $I$ is $A$-invariant if and only if
$A_g I_{d(g)} \subseteq I_{c(g)} A_g$,
for all $g \in \mor(G)$;

\item[{\rm (b)}] if $A$ is strongly graded
by a groupoid $G$
and $A/A_0$ is graded ideal associative,
then $I$ is $A$-invariant if and only if
$A_g I_{d(g)} A_{g^{-1}} \subseteq I_{c(g)}$,
for all $g \in \mor(G)$.
\end{itemize}
\end{proposition}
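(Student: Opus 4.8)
The plan is to reduce the two‑sided containment $AI\subseteq IA$ defining $A$‑invariance to a family of containments indexed by $\mor(G)$, by comparing homogeneous components. The starting point is that, since $A$ is locally unital, the ideal $I$ of $A_0$ is itself graded over the objects: for $e\in\ob(G)$ the local identity $1_e\in A_e$ is a central idempotent of $A_0$ with $1_e A_f=\{0\}$ for $f\neq e$, so $1_e x=x_e$ for $x\in I$ and hence $I=\oplus_{e\in\ob(G)}I_e$ with $I_e=I\cap A_e=1_e I=A_e I=IA_e$. I would then record the two basic computations $A_g I_e\subseteq A_{ge}$, which vanishes unless $e=d(g)$ (in which case $ge=g$, so $A_g I_{d(g)}\subseteq A_g$), and symmetrically $I_e A_g\subseteq A_{eg}$, which vanishes unless $e=c(g)$ (in which case $I_{c(g)}A_g\subseteq A_g$). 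These show that $A_g I_{d(g)}$ and $I_{c(g)}A_g$ both sit inside the single homogeneous component $A_g$.

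For part (a), I would expand both sides homogeneously. The computations above give $AI=\oplus_{g\in\mor(G)}A_g I_{d(g)}$ and $IA=\oplus_{g\in\mor(G)}I_{c(g)}A_g$ as graded submodules, with degree‑$g$ parts $A_g I_{d(g)}$ and $I_{c(g)}A_g$ respectively. Since $A=\oplus_{g}A_g$, the inclusion $AI\subseteq IA$ is equivalent to the inclusion of degree‑$g$ parts for every $g$, that is to $A_g I_{d(g)}\subseteq I_{c(g)}A_g$ for all $g\in\mor(G)$; the forward direction projects onto the degree‑$g$ component and the converse sums the componentwise inclusions back up.

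For part (b), since part (a) already characterises $A$‑invariance, it suffices to prove that the criterion $A_g I_{d(g)}\subseteq I_{c(g)}A_g$ is equivalent to $A_g I_{d(g)}A_{g^{-1}}\subseteq I_{c(g)}$ for all $g$, now using that $G$ is a groupoid, that the grading is strong (so $A_g A_{g^{-1}}=A_{c(g)}$ and $A_{g^{-1}}A_g=A_{d(g)}$), and that $A/A_0$ is graded ideal associative. For the forward implication I would multiply $A_g I_{d(g)}\subseteq I_{c(g)}A_g$ on the right by $A_{g^{-1}}$ and regroup to obtain $A_g I_{d(g)}A_{g^{-1}}\subseteq I_{c(g)}(A_g A_{g^{-1}})=I_{c(g)}A_{c(g)}\subseteq I_{c(g)}$, the last inclusion holding because $I$ absorbs products from $A_0\supseteq A_{c(g)}$ and the result lands in $A_{c(g)}$. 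For the converse I would use $I_{d(g)}=I_{d(g)}A_{d(g)}=I_{d(g)}(A_{g^{-1}}A_g)$ to write $A_g I_{d(g)}=(A_g I_{d(g)}A_{g^{-1}})A_g\subseteq I_{c(g)}A_g$.

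The main obstacle is keeping these regroupings legitimate in the non‑associative setting: each time parentheses are moved in a triple product such as $(I_{c(g)}A_g)A_{g^{-1}}=I_{c(g)}(A_g A_{g^{-1}})$, graded ideal associativity is being invoked, and I must ensure that the homogeneous ideal pieces $I_{d(g)},I_{c(g)}$ are genuinely covered by that hypothesis. I would handle this by rewriting them via the identities from the first paragraph, namely $I_{d(g)}=A_{d(g)}I$ and $I_{c(g)}=A_{c(g)}I$ (equivalently $1_{d(g)}I$ and $1_{c(g)}I$), so that every expression is literally a product of the ideal $I$ with homogeneous components of $A$, to which graded ideal associativity applies directly; verifying that these rewritings combine correctly with the strong‑grading identities and the idempotent absorption $I_{c(g)}A_{c(g)}=I_{c(g)}$ is the only step requiring real care, the rest being the homogeneous bookkeeping set up above.
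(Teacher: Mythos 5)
Your proposal is correct and follows essentially the same route as the paper: part (a) by comparing degree-$g$ components of $AI$ and $IA$ (the paper states this in one line; your verification that local unitality makes $I=\oplus_{e\in\ob(G)}I_e$ graded over the objects is exactly the detail being suppressed), and part (b) by multiplying on the right by $A_{g^{-1}}$ and using $A_gA_{g^{-1}}=A_{c(g)}$, $A_{g^{-1}}A_g=A_{d(g)}$ together with graded ideal associativity, just as in the paper. Your worry about whether the pieces $I_{d(g)},I_{c(g)}$ are covered by graded ideal associativity is legitimate but resolves even more simply than your rewriting $I_e=A_eI$: each $I_e=1_{A_e}I$ is itself an ideal of $A_0$ (since $A_fA_e=\{0\}$ for $f\neq e$), so the hypothesis applies to it directly.
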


\begin{proof}
(a) $I$ is $A$-invariant if and only if
$AI \subseteq IA$. Since $A$ is graded,
this inclusion holds if and only if 
$A_g I_{d(g)} \subseteq I_{c(g)} A_g$
for all $g \in \mor(G)$.

(b) We use (a).
Take $g \in \mor(G)$.
First we show the ''only if'' statement. 
Suppose that $A_g I_{d(g)} \subseteq I_{c(g)} A_g$.
Then $A_g I_{d(g)} A_{g^{-1}} \subseteq I_{c(g)} A_g A_{g^{-1}}
= I_{c(g)} A_{c(g)} \subseteq I_{c(g)}$.
Now we show the ''if'' statement.
Suppose that $A_g I_{d(g)} A_{g^{-1}} \subseteq I_{c(g)}$.
Then $A_g I_{d(g)} \subseteq A_g I_{d(g)} A_{d(g)} = A_g I_{d(g)} A_{g^{-1}} A_g
\subseteq I_{c(g)} A_g$.
\end{proof}

\begin{proposition}\label{abeliandegreemap}
If $A$ is a unital 
ring which is
strongly graded 
by an abelian group $G$
such that $A/A_e$ is graded ideal associative,
$A_e$ is $A$-simple,
then $A/A$ has a degree map. 
\end{proposition}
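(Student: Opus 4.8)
The plan is to take as degree map the function $d(a) = |\Supp(a)|$, the number of non-zero homogeneous components of $a$, so that (d1) is immediate, and to use as generating set $X$ the collection $\bigcup_{g \in G} A_g$ of homogeneous elements, which visibly generates the base ring of the extension (here that base is $A$ itself). The whole problem then reduces to verifying (d2), and I claim this follows once I produce, inside every non-zero ideal $I$ and for each non-zero $a \in I$, an element $a' \in I$ with $d(a') \le d(a)$ whose $e$-component equals $1$.

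The reason such an $a'$ settles (d2) \emph{uniformly in $b$} is a homogeneity feature special to abelian gradings. For homogeneous $b \in A_h$ one computes, using $A_g A_h \subseteq A_{gh}$ and commutativity of $G$, that $(a'b - ba')_k = a'_{kh^{-1}}\, b - b\, a'_{kh^{-1}}$ for every $k$; thus the commutator is supported inside $\Supp(a')\,h$, and its term at $k = h$ vanishes precisely because $a'_e = 1$ commutes with $b$. Hence $\Supp(a'b - ba') \subseteq (\Supp(a')\,h) \setminus \{h\}$ and $d(a'b - ba') \le d(a') - 1 < d(a)$, simultaneously for every homogeneous $b$, and therefore for every $b \in X$.

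To build $a'$ I proceed in two steps. First I shift a chosen component into degree $e$: picking $g_1 \in \Supp(a)$ and using that strong grading forces the gradation to be left non-degenerate, I find $c \in A_{g_1^{-1}}$ with $x := c\,a_{g_1} \neq 0$; then $a'' := ca \in I$ satisfies $(a'')_e = x$ and $\Supp(a'') \subseteq g_1^{-1}\Supp(a)$, so $d(a'') \le d(a)$. Second I invoke $A$-simplicity of $A_e$: the additive span $J := \sum_{g \in G} A_g\, x\, A_{g^{-1}}$ is an $A$-invariant ideal of $A_e$ containing the non-zero element $x$, so $J = A_e$ and in particular $1 = \sum_i u_i\, x\, v_i$ for some $u_i \in A_{p_i}$ and $v_i \in A_{p_i^{-1}}$. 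Setting $a' := \sum_i u_i\, a''\, v_i \in I$ and using once more that $G$ is abelian, each summand contributes $u_i (a'')_k v_i \in A_k$ in degree $k$, whence $(a')_e = \sum_i u_i\, x\, v_i = 1$ and $\Supp(a') \subseteq \Supp(a'')$; thus $d(a') \le d(a)$, as required.

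The step I expect to be the main obstacle is confirming that $J = \sum_{g} A_g\, x\, A_{g^{-1}}$ is an ideal of $A_e$ which is \emph{$A$-invariant}, for only then does $A$-simplicity of $A_e$ force $J = A_e$. That $J$ is an $A_e$-subbimodule follows from strong grading ($A_e A_g = A_g A_e = A_g$), and by Proposition \ref{Ainvariantequivalent}(b)---available because $A$ is strongly graded by the group $G$ and $A/A_e$ is graded ideal associative---its $A$-invariance is equivalent to $A_g J A_{g^{-1}} \subseteq J$ for all $g$, which is built into the definition of $J$. This is exactly where all three structural hypotheses enter: strong grading (for $A_g A_{g^{-1}} = A_e$, for left non-degeneracy in the shift, and for $J$ to be a bimodule), graded ideal associativity (to legitimize the regroupings such as $A_h(A_g\, x\, A_{g^{-1}}) = (A_h A_g)\, x\, A_{g^{-1}}$ in the non-associative setting), and commutativity of $G$ (to keep each conjugate $A_g\, x\, A_{g^{-1}}$ in degree $e$ and the commutators homogeneous). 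Once $J = A_e$ is secured, the remaining degree bookkeeping is routine.
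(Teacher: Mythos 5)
Your overall strategy coincides with the paper's: the same degree map $d(a)=|\Supp(a)|$, the same generating set $X=\bigcup_{g\in G}A_g$, the same commutator bookkeeping for homogeneous $b$ (kill the $e$-component by arranging $a'_e=1$, use abelianness of $G$ to keep everything homogeneous), and the same shift of a support element into degree $e$ via non-degeneracy of the strong gradation (you multiply on the left, the paper on the right; both lean on the same unproved-but-asserted non-degeneracy that the paper itself invokes). The one place you genuinely diverge is the construction of the auxiliary $A$-invariant ideal $J$ of $A_e$, and that is where your argument has a gap in the stated (non-associative) generality. You set $J=\sum_{g}A_g\,x\,A_{g^{-1}}$ for the single element $x=(a'')_e$ and justify the regroupings $A_h(A_g\,x\,A_{g^{-1}})=(A_hA_g)\,x\,A_{g^{-1}}$ by graded ideal associativity. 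But graded ideal associativity, as defined in the paper, only allows an \emph{ideal of $A_0$} to associate with graded components; it says nothing about a bare element $x\in A_e$, nor about the set $A_g\,x\,A_{g^{-1}}$ before it is known to be an ideal. Consequently neither your claim that $J$ is an $A_e$-bimodule (which needs $w(u(xv))=(wu)(xv)$-type rewritings) nor the hypothesis of Proposition \ref{Ainvariantequivalent}(b) (which requires $J$ to already be an ideal of $A_0$) is available as you stand. In the associative case your proof is complete; in the non-associative case this step is unjustified and there is a circularity: the associativity you need is only licensed for ideals, which is what you are trying to prove $J$ is.

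The gap is localized and reparable in two ways. One repair is to replace $x$ by the ideal $I_x$ of $A_e$ generated by $x$ and set $J=\sum_g A_g\,I_x\,A_{g^{-1}}$; now graded ideal associativity applies, abelianness gives $A_h(A_g\,I_x\,A_{g^{-1}})A_{h^{-1}}\subseteq A_{hg}\,I_x\,A_{(hg)^{-1}}$, and after $J=A_e$ you lift the identity $1=\sum_i u_i y_i v_i$ (with $y_i\in I_x$) back into $I$ by substituting $a''$ for $x$ in each generator of $I_x$, which preserves support containment and recovers $(a')_e=1$ exactly as in your conjugation step. The paper's own choice sidesteps associativity more elegantly: it takes $J=\{c\in\langle a\rangle \mid \Supp(c)\subseteq\Supp(a)\}_e$, the additive group of $e$-components of elements of the ideal $\langle a\rangle$ with support inside $\Supp(a)$. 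There the ideal property of $J$ and the inclusion $A_gJA_{g^{-1}}\subseteq J$ are inherited directly from $\langle a\rangle$ by projecting to the $e$-component (no element-level regrouping needed, since $\langle a\rangle$ is an honest ideal of $A$), graded ideal associativity enters only through Proposition \ref{Ainvariantequivalent}(b), and the support condition $\Supp(a')\subseteq\Supp(a)$ is built into the definition of $J$ rather than tracked by hand through the conjugating coefficients as you do. You correctly identified this step as the main obstacle; the resolution you offered does not cover it as written.
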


\begin{proof}
Put $B:=A$.
For $a \in A$, define $d(a)$ to be the cardinality of $\Supp(a)$.
Let $X = \cup_{g \in G} A_g$, i.e.
we let $X$ be the set of homogeneous elements of $A$.
It is clear that $X$ generates $A$ as a ring.
Condition (d1) holds trivially.
Now we show condition (d2).
Take a non-zero ideal $I$ of $A$ and
a non-zero $a \in I$ and $b \in X$.
We now consider two cases.

Case 1: $a_e \neq 0$. 
Let $\langle a \rangle$ denote the ideal in $A$
generated by $a$.
Now put
\begin{align*}
J = \{ c \in \langle a \rangle \mid
\Supp(c) \subseteq \Supp(a) \}_e.
\end{align*}
It is clear that $J$ is a non-zero ideal of $A_e$.
Now we show that $J$ is $A$-invariant.
By Proposition \ref{Ainvariantequivalent}(b) it is
enough to show that $A_g J A_{g^{-1}} \subseteq J$
for all $g \in G$.
Take $g \in G$ and $c \in \langle a \rangle$
with $\Supp(c) \subseteq \Supp(a)$.
Then
$\langle a \rangle \supseteq A_g c A_{g^{-1}} = 
\sum_{h \in G}
A_g c_h A_{g^{-1}}.$
Since $G$ is abelian, we get that
$A_g c_h A_{g^{-1}} \subseteq
A_{gh}A_{g^{-1}} + A_g A_{h g^{-1}} \subseteq 
A_{g h g^{-1}}
= A_h$
for any $h \in G$.
Hence $\Supp(A_g c A_{g^{-1}} ) \subseteq \Supp(a)$
and therefore $A_g c_e A_{g^{-1}} \subseteq J$.
Hence, we get that $A_g J A_{g^{-1}} \subseteq J$.
Since $A_e$ is $A$-simple we get that $J = A_e$
and hence there is $a' \in \langle a \rangle$
with $a_e' = 1_{A}$. 
Since $a_e' = 1_{A}$, $G$ is abelian
and $b$ is homogeneous, we get that
$d(a' b - b a') \leq d(a') - 1 < d(a') \leq d(a)$. 

Case 2: $a_e = 0$.
Take a non-identity $g \in G$ with $a_g \neq 0$.
By right non-degeneracy of the strong gradation, there is
$c \in A_{g^{-1}}$ with $(ac)_e \neq 0$.
Since $d(ac) \leq d(a)$ we can
use the element $ac$ and proceed as in Case 1.
\end{proof}

\begin{lemma}\label{identitylemma}
Suppose that $A$ is a locally unital
ring graded by a category $G$. 
If $I$ is an ideal of $A$, then
\begin{itemize}

\item[{\rm (a)}] $I = A$ if and only if
$1_{A_e} \in I$ for all $e \in \ob(G)$;

\item[{\rm (b)}] if $G$ is a connected groupoid
and $A$ is strongly graded,
then $I = A$ if and only if 
there is $e \in \ob(G)$ such that
$1_{A_e} \in I$.

\end{itemize}
\end{lemma}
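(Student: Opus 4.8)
The plan is to prove (a) directly from local unitality and then bootstrap (b) from (a) using connectedness and strong grading.

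For part (a), the forward implication is immediate: if $I = A$ then $1_{A_e} \in A = I$ for every $e \in \ob(G)$. For the reverse implication, I would show that $I$ contains every homogeneous component. Fix $g \in \mor(G)$ and $x \in A_g$. Since $A_g$ is a unital left $A_{c(g)}$-module, we have $1_{A_{c(g)}} x = x$. As $c(g) \in \ob(G)$, the hypothesis gives $1_{A_{c(g)}} \in I$, and because $I$ is an ideal we get $x = 1_{A_{c(g)}} x \in IA \subseteq I$. Hence $A_g \subseteq I$ for every $g$, and since $A = \oplus_{g \in \mor(G)} A_g$ we conclude $A = I$. Note that no associativity is needed here, as only the one-sided module action and the ideal property enter.

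For part (b), the forward implication again follows trivially (choose any object, which exists since $A$ is locally unital). For the reverse implication, suppose $1_{A_e} \in I$ for some $e \in \ob(G)$; I claim $1_{A_f} \in I$ for every $f \in \ob(G)$, which by part (a) yields $I = A$. Given $f$, connectedness of the groupoid $G$ provides a morphism $g$ with $d(g) = e$ and $c(g) = f$. The key step is to transport the identity: for $x \in A_g$ and $y \in A_{g^{-1}}$, right unitality of $A_g$ over $A_{d(g)} = A_e$ gives $x\, 1_{A_e} = x$, so $xy = (x\, 1_{A_e}) y$. Since $1_{A_e} \in I$, first $x\,1_{A_e} \in I$ and then $(x\,1_{A_e})y \in I$ by the ideal property, so $xy \in I$; summing, $A_g A_{g^{-1}} \subseteq I$. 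Finally, strong grading gives $A_g A_{g^{-1}} = A_{g g^{-1}} = A_{c(g)} = A_f$, whence $A_f \subseteq I$ and in particular $1_{A_f} \in I$.

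The main obstacle is part (b): one must be careful with the domain/codomain bookkeeping so that $A_g A_{g^{-1}}$ genuinely lands in $A_f$, and one must pinpoint where strong grading is actually used, namely in the equality $A_{c(g)} = A_g A_{g^{-1}}$, which is what realizes $1_{A_f}$ as a sum of products already shown to lie in $I$. The non-associative setting causes no trouble, since only the two-sided ideal property and the one-sided unital module actions are invoked and no triple product is ever reassociated.
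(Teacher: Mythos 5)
Your proof is correct and follows essentially the same route as the paper: part (a) via the unital module action (you use $1_{A_{c(g)}}x = x$ where the paper uses $x\,1_{A_{d(g)}} = x$, an immaterial difference), and part (b) by using connectedness to pick $g$ with $d(g)=e$, $c(g)=f$ and strong grading to get $1_{A_f} \in A_f = A_g A_{g^{-1}} \subseteq I$, then reducing to (a). Your explicit handling of the parenthesization $(x\,1_{A_e})y$ matches the paper's convention $I \supseteq A_g 1_{A_e} A_{g^{-1}}$ and correctly avoids any appeal to associativity.
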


\begin{proof}
The ''only if'' statements are trivial.
Therefore we only need to show the ''if'' statements.
It is enough to show that $I \supseteq A_g$
for all $g \in \mor(G)$.
Take $g \in \mor(G)$.
(a) Since $1_{A_{d(g)}} \in I$,
we get that $A_g = A_g 1_{A_{d(g)}} \subseteq I$.

(b) Suppose that there is $e \in \ob(G)$
such that $1_{A_e} \in I$.
Take $f \in \ob(G)$.
By (a) we are done if we can 
show that $1_{A_f} \in I$.
Since $G$ is connected, 
there is $g \in \mor(G)$ such that
$d(g)=e$ and $c(g)=f$.
Hence, by the strong gradation, we get that
$I \supseteq A_g 1_{A_e} A_{g^{-1}} = 
A_g A_{g^{-1}} = A_f \ni 1_{A_f}$. 
\end{proof}

\begin{proposition}\label{localsimplicity}
If $A$ is a ring which is locally unital and
strongly graded by a connected groupoid $G$ 
with $A_{G_e}$
simple for all $e \in \ob(G)$,
then $A$ is simple.
\end{proposition}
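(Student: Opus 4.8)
The plan is to reduce the statement to Lemma \ref{identitylemma}(b): since $G$ is a connected groupoid and $A$ is strongly graded, an ideal equals $A$ as soon as it contains $1_{A_e}$ for a single object $e$. So, given a non-zero ideal $I$ of $A$, it suffices to produce some $e \in \ob(G)$ with $1_{A_e} \in I$. I would obtain this in two moves: first manufacture a non-zero element of $I$ lying entirely inside one of the isotropy subrings $A_{G_e} = \oplus_{h \in G_e} A_h$ (where $G_e$ is the isotropy group at $e$), and then invoke simplicity of $A_{G_e}$.

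For the concentration move I would first record that strong gradation together with local unitality forces the gradation to be right non-degenerate: if $0 \neq x \in A_g$ had $x A_{g^{-1}} = \{0\}$, then, using $A_{g^{-1}} A_g = A_{d(g)}$ and $x = x\,1_{A_{d(g)}}$, one is forced to conclude $x = 0$, a contradiction. Now pick $0 \neq a \in I$ and a morphism $g \in \Supp(a)$, and set $e := c(g)$. The element $1_{A_e} a$ still lies in $I$ and, by local unitality, equals $\sum_{h \,:\, c(h) = e} a_h$, so its $g$-component is still $a_g \neq 0$. By right non-degeneracy choose $c \in A_{g^{-1}}$ with $a_g c \neq 0$ and put $b := (1_{A_e} a)\,c \in I$. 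Each surviving term $a_h c$ lies in $A_{hg^{-1}}$ with $d(h) = c(g^{-1})$ and $c(h) = e$, so that $hg^{-1}$ is a morphism from $e$ to $e$, i.e. $hg^{-1} \in G_e$; hence $\Supp(b) \subseteq G_e$ and $b \in A_{G_e}$. Moreover $hg^{-1} = e$ forces $h = g$, so the $e$-component of $b$ equals $a_g c \neq 0$, and in particular $b \neq 0$.

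With such a $b$ in hand, $I \cap A_{G_e}$ is a non-zero two-sided ideal of the ring $A_{G_e}$, which is unital with identity $1_{A_e}$. Since $A_{G_e}$ is simple, $I \cap A_{G_e} = A_{G_e}$, whence $1_{A_e} \in I$, and Lemma \ref{identitylemma}(b) yields $I = A$. The main obstacle is precisely the concentration step, namely transforming an arbitrary element of $I$ into a non-zero element supported on a single isotropy group; the device that makes it work is combining a left projection by the local unit $1_{A_e}$ with a right multiplication by a carefully chosen element of $A_{g^{-1}}$, whose existence rests on the non-degeneracy supplied by strong gradation. Some care is also needed because $A$ need not be associative, so one must make sure that the only multiplicative identities used (such as $x = x\,1_{A_{d(g)}}$ and the action of the local units on the homogeneous components) are those guaranteed by local unitality.
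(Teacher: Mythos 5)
Your proof is correct and takes essentially the same route as the paper: local unitality plus the strong gradation give non-degeneracy, which lets you turn an arbitrary non-zero $a \in I$ into a non-zero element of $I \cap A_{G_e}$, and then simplicity of $A_{G_e}$ yields $1_{A_e} \in I$, so Lemma \ref{identitylemma}(b) finishes. The paper phrases the concentration step as forming the non-zero ideal $J = 1_{A_e} I 1_{A_e}$ of $A_{G_e}$, which by local unitality coincides with your $I \cap A_{G_e}$, so your more explicit construction of $b = (1_{A_e}a)c$ supported in $G_e$ is the same argument in substance (and your passing use of $x(yz)=(xy)z$ in verifying non-degeneracy is the same implicit step the paper makes when it asserts non-degeneracy from local unitality).
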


\begin{proof}
Take a non-zero ideal $I$ of $A$
and take a non-zero $a \in I$.
By local unitality the strong gradation is both left and right non-degenerate.
Hence we can always choose $a$ such that $a_e\neq 0$, for some $e \in \ob(G)$.
But then $J = 1_{A_e} I 1_{A_e}$
is a non-zero ideal of $A_{G_e}$.
Since $A_{G_e}$ is simple, we get that $J = A_{G_e}$.
This implies that $1_{A_e} \in J \subseteq I$.
The claim now follows from 
Lemma \ref{identitylemma}(b).
\end{proof}

\begin{proposition}\label{propgroupoidsimple}
Suppose that $A$ is a locally unital groupoid graded ring
with $A_0$ $A$-simple.
{\rm (a)} If the gradation is left or right non-degenerate,
$A_0$ is a maximal commutative subring of $A$
and every intersection of $A_0$ with an ideal of $A$
is an $A$-invariant ideal of $A_0$, then $A$ is simple;
{\rm (b)} If $A$ is strongly graded by a locally abelian
connected groupoid such that
$A/A_0$ is graded ideal associative
and for each $e \in \ob(G)$ the ring 
$Z(A_{G_e})$ is simple,
then $A$ is simple.
\end{proposition}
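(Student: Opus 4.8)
The plan is to derive both statements from Theorem~\ref{secondtheorem}, choosing the subring to which we apply it appropriately in each case and checking conditions (i)--(iv).

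For part (a) I would take $B = A_0$. The crucial preliminary observation is that a maximal commutative subring is in particular commutative: from $A_0 = C_A(A_0)$ we read off $A_0 \subseteq C_A(A_0)$, so every element of $A_0$ commutes with every other, whence $Z(A_0) = A_0$. Condition (i) then holds because $C_A(A_0) = A_0$ is $A$-simple by hypothesis; condition (ii) is exactly the stated hypothesis that every intersection of $A_0$ with an ideal of $A$ is $A$-invariant; condition (iii) follows from local unitality, since the local identities $1_{A_e}$ lie in $A_0$ and each homogeneous component satisfies $a_g = 1_{A_{c(g)}}(1_{A_{c(g)}} a_g) \in A A_0 A$, so that $A A_0 A = A$; and condition (iv) is supplied by Proposition~\ref{maximalcommutativedegreemap}, since $Z(A_0) = A_0$ and the gradation is left or right non-degenerate, giving a degree map for $A/A_0$. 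Theorem~\ref{secondtheorem} then yields simplicity of $A$.

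For part (b) I would instead reduce to the isotropy rings via Proposition~\ref{localsimplicity}: it suffices to prove that $A_{G_e}$ is simple for each $e \in \ob(G)$. Fix $e$. Since $A$ is locally unital, $A_{G_e}$ is a unital ring with identity $1_{A_e}$, strongly graded by the abelian group $G_e$. I would first note that every ideal of $A_e$ is automatically an ideal of $A_0$, because $A_f A_e = A_e A_f = \{0\}$ for distinct objects $f \neq e$, so $A_0 = \oplus_f A_f$ is an orthogonal direct sum of ideals; consequently graded ideal associativity of $A/A_0$ descends to graded ideal associativity of $A_{G_e}/A_e$. Granting the key fact that $A_e$ is $A_{G_e}$-simple (discussed below), Proposition~\ref{abeliandegreemap} then produces a degree map for $A_{G_e}/A_{G_e}$. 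I would finish by applying Theorem~\ref{secondtheorem} to $A_{G_e}$ with $B = A_{G_e}$, so that $C_{A_{G_e}}(B) = Z(A_{G_e})$: condition (i) holds since the simple ring $Z(A_{G_e})$ has no non-trivial ideals at all and is therefore $A_{G_e}$-simple; condition (ii) holds because any intersection of $Z(A_{G_e})$ with an ideal of $A_{G_e}$ is an ideal of the simple ring $Z(A_{G_e})$, hence $\{0\}$ or all of it, and both are $A_{G_e}$-invariant; condition (iii) holds because $1_{A_e} \in Z(A_{G_e})$; and (iv) is the degree map just obtained. This makes each $A_{G_e}$ simple, and Proposition~\ref{localsimplicity} completes the argument.

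The main obstacle is establishing that $A_e$ is $A_{G_e}$-simple from the hypothesis that $A_0$ is $A$-simple. I would argue by contraposition, transporting a putative non-trivial $A_{G_e}$-invariant ideal $J$ of $A_e$ up to a non-trivial $A$-invariant ideal of $A_0$. Using connectedness of $G$, choose for each object $f$ a morphism $g_f : e \to f$ with $g_e = e$, and set $I = \oplus_f A_{g_f} J A_{g_f^{-1}}$, so that $I_e = J$. Strong grading (in the form $A_f A_{g_f} = A_{g_f}$) together with graded ideal associativity shows each summand is an ideal of $A_f$, and $I_e = J$ being non-zero and proper forces $I$ non-zero and proper. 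For $A$-invariance I would use Proposition~\ref{Ainvariantequivalent}(b): writing $h g_{d(h)} = g_{c(h)} k$ with $k = g_{c(h)}^{-1} h g_{d(h)} \in G_e$, strong grading gives $A_h I_{d(h)} A_{h^{-1}} = A_{g_{c(h)}}(A_k J A_{k^{-1}}) A_{g_{c(h)}^{-1}} \subseteq I_{c(h)}$, where $A_k J A_{k^{-1}} \subseteq J$ because $J$ is $A_{G_e}$-invariant (Proposition~\ref{Ainvariantequivalent}(b) applied to the one-object groupoid $G_e$). This contradicts $A$-simplicity of $A_0$. Checking that $I$ is independent of the chosen morphisms $g_f$ and carrying out the non-associative bimodule bookkeeping is where the computation is heaviest, and it is the step I expect to require the most care.
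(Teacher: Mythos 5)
Your proposal is correct and takes essentially the same route as the paper: part (a) is exactly the paper's appeal to Theorem~\ref{secondtheorem} together with Proposition~\ref{maximalcommutativedegreemap} (with the details of conditions (i)--(iv) filled in), and part (b) follows the paper's reduction through the key claim that $A_e$ is $A_{G_e}$-simple, then Proposition~\ref{abeliandegreemap}, Theorem~\ref{secondtheorem} and Proposition~\ref{localsimplicity}. The only (harmless) deviation is inside the key claim: the paper takes $I$ to be the sum of $A_h I_e A_{h^{-1}}$ over \emph{all} $h$ with $d(h)=e$ and uses $A$-simplicity positively (forcing $I = A_0$, whence $A_e = I \cap A_e \subseteq I_e$), whereas you choose one morphism $g_f$ per object and argue by contraposition via properness of $I$ --- which works since $I \cap A_e = J$, and which also renders your worry about independence of the chosen $g_f$ moot, as any fixed choice suffices.
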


\begin{proof}
(a) This follows immediately from
Theorem \ref{secondtheorem} and
Proposition \ref{maximalcommutativedegreemap}.

(b) Take $e \in \ob(G)$.
We claim that the $A$-simplicity of $A_0$ implies that $A_e$ is $A_{G_e}$-simple.
If we assume that the claim holds, then it follows by Proposition \ref{abeliandegreemap} and Theorem \ref{secondtheorem},
that $A_{G_e}$ is simple.
Hence, by Proposition \ref{localsimplicity}, we get that
$A$ is simple.
Now we show the claim.
Suppose that $I_e$ is a non-zero $A_{G_e}$-invariant ideal of $A_e$.
Define the set $I$ as the sum of the additive groups $A_h I_e A_{h^{-1}} $,
for $h \in \mor(G)$ with $d(h)=e$.
Clearly, $I$ is an ideal of $A_0$.
By Proposition \ref{Ainvariantequivalent}(b) it is clear that $I$ is $A$-invariant.
Since $I_e$ is non-zero it follows that $I$ is non-zero,
and hence we get that $I = A_0$. Thus, $I \cap A_e = A_e$.
But $I \cap A_e$ equals the sum of the sets $A_h I_e A_{h^{-1}} $,
for $h \in G_e$.
Since $I_e$ is $A_{G_e}$-invariant we thus get, by Proposition \ref{Ainvariantequivalent}(b), that
$A_e = I \cap A_e \subseteq I_e$. This shows that $A_e$ is $A_{G_e}$-simple.
\end{proof}

\subsection{Category Crossed Product Algebras}\label{crossedproductalgebras}

In this section, we use the results from the
previous section to obtain both necessary
(see Proposition \ref{crossedproductsimpleimplies})
and sufficient (see Proposition \ref{crossedproductmaximalcommutativegroupoidgraded})
conditions for category crossed products to be simple.
Note that the definition that is being used here, differs slightly from the one in \cite{oinlun08}.

\begin{definition}
Let $\Ring$ denote the category of unital rings
and ring homomorphisms that respect multiplicative
identity elements.
By a \emph{crossed system} we mean a
pair $(\sigma,\alpha)$ where $\sigma$ is a 
functor $G \rightarrow \Ring$ and
$\alpha$ is a map from $G^{(2)}$
to the disjoint union of the sets $\sigma_e$,
for $e \in \ob(G)$, satisfying 
$\alpha_{g,h} 
\in \sigma_{d(g)}$, for 
$(g,h) \in G^{(2)}$.
By abuse of notation $\sigma_e$, for $e\in \ob(G)$,
is the image of the object $e$, or the image of its associated identity morphism, under the functor $\sigma$.
Hence, $\sigma_e$ will denote either an object (ring) in Ring or a morphism (ring morphism) in Ring.
It will be clear from the context, how to interpret $\sigma_e$.
In this article, we suppose that each $\alpha_{g,h}$,
for $g,h \in \mor(G)$ with $d(g)=c(h)$,
is a unit in $B_{c(g)}$ and
{\it associates and commutes with elements in $B_{c(g)}$}, 
i.e. such that the equalities 
\begin{align*}
\alpha_{g,h} (bc) = (b \alpha_{g,h}) c = 
b(\alpha_{g,h} c) = (bc) \alpha_{g,h}
\end{align*}
hold for all $b,c \in B_{c(g)}$.
We will use the notation $B_e = \sigma_e$, for 
$e \in \ob(G)$, and $B = \oplus_{e \in \ob(G)} B_e$.
Let $A = B \rtimes_{\alpha}^{\sigma} G$ denote the collection
of formal sums $\sum_{g \in G} b_g u_g$, where $b_g \in B_{c(g)}$,
$g \in \mor(G)$, are chosen so that all but finitely many of them are
zero. Define addition on $A$ pointwise
\begin{align*}
\sum_{g \in G} a_g u_g +
\sum_{g \in G} b_g u_g = 
\sum_{g \in G} \left( a_g + b_g \right) u_g
\end{align*}
and let the multiplication be defined by
the bilinear extension of the relation
\begin{equation}\label{crossedproductmultiplication}
( a_g u_g ) (b_h u_h ) =
a_g \sigma_g (b_h) \alpha_{g,h} u_{gh}
\end{equation}
for $g,h \in \mor(G)$ with $d(g)=c(h)$
and $a_g u_g b_h u_h = 0$ when $d(g) \neq c(h)$.
We call $A$ the {\it crossed product algebra}
defined by the crossed system $(\sigma,\alpha)$. 
We also suppose that $\alpha_{c(g),g} =
\alpha_{g,d(g)} = 1_{A_{c(g)}}$ for all $g \in \mor(G)$.
If we put $A_g = B_{c(g)} u_g$, for $g \in \mor(G)$,
then it is clear that this defines a gradation
on the ring $A$ which makes it locally unital
with $1_{A_e} = 1_{B_e} u_e$, for $e \in \ob(G)$.
We often identify $B$ with $A_0$.
Given a crossed product $A = B \rtimes_{\alpha}^{\sigma} G$,
we say that an ideal $I$ of $B$ is {\it $G$-invariant} if 
$\sigma_g(I_{d(g)}) \subseteq I_{c(g)}$,
for all $g \in \mor(G)$.
We say that $B$ is $G$-simple if there is no 
non-trivial $G$-invariant ideal of $B$.
\end{definition}

\begin{proposition}\label{crossedproductGinvariant}
If $A = B \rtimes_{\alpha}^{\sigma} G$ is 
a crossed product,
then an ideal of $B$ is $G$-invariant 
if and only if it is $A$-invariant.
\end{proposition}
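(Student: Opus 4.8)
The plan is to show both implications by translating between the two invariance conditions using the crossed product multiplication rule \eqref{crossedproductmultiplication} and the grading $A_g = B_{c(g)} u_g$. Recall that an ideal $I$ of $B$ decomposes as $I = \oplus_{e \in \ob(G)} I_e$ with $I_e = I \cap B_e$, and $A$-invariance is characterized by Proposition \ref{Ainvariantequivalent}(a) as $A_g I_{d(g)} \subseteq I_{c(g)} A_g$ for all $g \in \mor(G)$. The strategy is to compute both sides of this inclusion explicitly in terms of the functor $\sigma$ and the twisting $\alpha$, and to compare the result with the definition of $G$-invariance, namely $\sigma_g(I_{d(g)}) \subseteq I_{c(g)}$.

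First I would fix $g \in \mor(G)$ and unpack $A_g I_{d(g)}$. A typical element is $(b_g u_g)(c u_{d(g)})$ with $b_g \in B_{c(g)}$ and $c \in I_{d(g)}$; using the multiplication rule and the normalization $\alpha_{g,d(g)} = 1_{A_{c(g)}}$, this equals $b_g \sigma_g(c) u_g$. Hence $A_g I_{d(g)} = B_{c(g)} \sigma_g(I_{d(g)}) u_g$, since $c \mapsto cu_{d(g)}$ ranges over $A_{d(g)}$ as $c$ ranges over $B_{d(g)}$, and $I_{d(g)} u_{d(g)}$ is exactly the degree-$d(g)$ part of $I$ viewed in $A$. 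Similarly $I_{c(g)} A_g = I_{c(g)} B_{c(g)} u_g = I_{c(g)} u_g$, where the last equality uses that $I_{c(g)}$ is an ideal of the unital ring $B_{c(g)}$, so $I_{c(g)} B_{c(g)} = I_{c(g)}$. Stripping the common factor $u_g$, the $A$-invariance condition from Proposition \ref{Ainvariantequivalent}(a) becomes $B_{c(g)} \sigma_g(I_{d(g)}) \subseteq I_{c(g)}$ for all $g$.

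It remains to see that this is equivalent to $G$-invariance, i.e.\ $\sigma_g(I_{d(g)}) \subseteq I_{c(g)}$ for all $g$. The forward direction is immediate: since $\sigma_g$ is a ring homomorphism respecting identities, $1_{B_{c(g)}} \in B_{c(g)}$, so $\sigma_g(I_{d(g)}) = 1_{B_{c(g)}} \sigma_g(I_{d(g)}) \subseteq B_{c(g)} \sigma_g(I_{d(g)}) \subseteq I_{c(g)}$. For the reverse, if $\sigma_g(I_{d(g)}) \subseteq I_{c(g)}$ then since $I_{c(g)}$ is an ideal of $B_{c(g)}$ we get $B_{c(g)} \sigma_g(I_{d(g)}) \subseteq B_{c(g)} I_{c(g)} \subseteq I_{c(g)}$, recovering the $A$-invariance inclusion. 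Because these equivalences hold for every $g \in \mor(G)$ simultaneously, the two global simplicity-type conditions coincide, which is exactly the claim.

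The step I expect to require the most care is the bookkeeping in unpacking $A_g I_{d(g)}$ and $I_{c(g)} A_g$, specifically justifying that $\sigma_g(I_{d(g)})$ and not some larger set appears, and that the $\alpha$ normalizations $\alpha_{g,d(g)} = \alpha_{c(g),g} = 1$ let us avoid any residual twisting factor. One must also confirm that $I_{d(g)} = I \cap B_{d(g)}$ is precisely the degree-$d(g)$ homogeneous part under the identification of $B$ with $A_0$, so that the graded characterization of Proposition \ref{Ainvariantequivalent}(a) applies verbatim. Everything else is the routine ideal-absorption argument using local unitality of the blocks $B_e$.
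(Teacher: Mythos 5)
Your proposal is correct and follows essentially the same route as the paper's proof: both reduce $A$-invariance to the homogeneous condition of Proposition \ref{Ainvariantequivalent}(a), compute $A_g I_{d(g)} = B_{c(g)}\sigma_g(I_{d(g)})u_g$ using the multiplication rule and the normalization $\alpha_{g,d(g)} = 1_{A_{c(g)}}$, and then use unitality of $B_{c(g)}$ together with ideal absorption to pass between $B_{c(g)}\sigma_g(I_{d(g)}) \subseteq I_{c(g)}$ and $\sigma_g(I_{d(g)}) \subseteq I_{c(g)}$. The only cosmetic difference is that you strip the common factor $u_g$ and prove a single two-way equivalence, where the paper runs the two inclusion chains separately.
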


\begin{proof}
Let $I$ be an ideal of $B$.

First we show the ''if'' statement.
Take $g \in \mor(G)$.
Suppose that $I$ is $A$-invariant.
By Proposition \ref{Ainvariantequivalent}(a),
we get that $A_g I_{d(g)} \subseteq I_{c(g)} A_g$.
By the definition of the product in $A$,
we therefore get that 
$B_{c(g)} \sigma_g( I_{d(g)} ) \subseteq I_{c(g)} B_{c(g)}$.
Since $B_{c(g)}$ is unital and $I_{c(g)}$
is an ideal of $B_{c(g)}$, we get that
$\sigma_g( I_{d(g)} ) \subseteq 
B_{c(g)} \sigma_g(I_{d(g)}) \subseteq
I_{c(g)} B_{c(g)} \subseteq I_{c(g)}$.
Hence $I$ is $G$-invariant.

Now we show the ''only if'' statement.
Suppose that $I$ is $G$-invariant.
Take
$g \in \mor(G)$. Then,
since $I_{c(g)}$ is an ideal of $B_{c(g)}$
and $B_{c(g)}$ is unital, we get that
$A_g I_{d(g)} = ( B_{c(g)} u_g ) I_{d(g)} =
B_{c(g)} \sigma_g( I_{d(g)} ) u_g \subseteq
B_{c(g)} I_{c(g)} u_g \subseteq I_{c(g)} u_g \subseteq
I_{c(g)} B_{c(g)} u_g = I_{c(g)} A_g$.
By Proposition \ref{Ainvariantequivalent}(a),
we get that $I$ is $A$-invariant.
\end{proof}

\begin{proposition}\label{crossedproductidealassociative}
If $A = B \rtimes_{\alpha}^{\sigma} G$ 
is a crossed product with every $\sigma_g$, for $g \in \mor(G)$,
surjective, then $A/B$ is graded ideal associative.
\end{proposition}

\begin{proof}
Let $I$ be an ideal of $B$.
Since each $\sigma_g$, for $g \in \mor(G)$,
is surjective, we get that images
of ideals by these maps are again ideals.
Also, since each $\alpha_{g,h}$, for $(g,h) \in G^{(2)}$,
is a unit in $B_{c(g)}$, the product
of $\alpha_{g,h}$ with an ideal of $B_{c(g)}$
equals the ideal.
Therefore, if $m$ and $n$ are non-negative
integers and $g_i \in \mor(G)$,
for $i \in \{1,\ldots,(m+n)\}$,
satisfy $d(g_i)=c(g_{i+1})$ for 
$i \in \{1,\ldots,(m+n-1)\}$,
then a straightforward induction over $m$ and $n$
shows that all the products
$A_{g_1} \cdots A_{g_m} I A_{g_{m+1}} \cdots A_{g_{m+n}}$,
performed in any prescribed order by inserting
parentheses between the factors, equals
$\sigma_{g_1 \cdots g_m} 
( I_{c(g_{m+1})} ) u_{g_{1} \cdots g_{m+n}}$.
\end{proof}

\begin{proposition}\label{crossedproductsimpleimplies}
If $G$ is a category and $A = B \rtimes_{\alpha}^{\sigma} G$ 
is a simple crossed product with every $\sigma_g$, for $g \in \mor(G)$,
surjective, then
$B$ is $G$-simple.
\end{proposition}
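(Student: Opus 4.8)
The plan is to reduce this statement to the results already established for category graded rings, since a crossed product $A = B \rtimes_{\alpha}^{\sigma} G$ is, by its very construction, a locally unital ring graded by $G$ with $A_0 = B$. The key translation tools are Proposition \ref{crossedproductidealassociative}, which converts the surjectivity hypothesis on the maps $\sigma_g$ into graded ideal associativity of $A/B$, and Proposition \ref{crossedproductGinvariant}, which identifies $G$-invariance with $A$-invariance for ideals of $B$. Given these, the core of the argument is a direct application of Proposition \ref{Rsimpleimplies}.

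First I would record that, since every $\sigma_g$ is surjective, Proposition \ref{crossedproductidealassociative} yields that $A/B$ is graded ideal associative; by the remark following the definition of graded ideal associativity, this implies that $A/B$ is ideal associative. Next I would observe that the crossed product structure makes $A$ locally unital, with $1_{A_e} = 1_{B_e} u_e$ for $e \in \ob(G)$, and that $A$ is simple by hypothesis. These three facts are precisely the hypotheses of Proposition \ref{Rsimpleimplies}, so I can conclude that $A_0 = B$ is $A$-simple.

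To complete the proof I would then pass from $A$-simplicity to $G$-simplicity using Proposition \ref{crossedproductGinvariant}: an ideal of $B$ is $G$-invariant if and only if it is $A$-invariant, so the non-trivial $G$-invariant ideals of $B$ coincide with its non-trivial $A$-invariant ideals. Since $B$ is $A$-simple, it therefore has no non-trivial $G$-invariant ideal, that is, $B$ is $G$-simple, as claimed.

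I do not anticipate a genuine obstacle here, since the substantive work has already been carried out in the category graded setting; the only points requiring any care are verifying that the crossed product really satisfies the local unitality assumption of Proposition \ref{Rsimpleimplies} and confirming that the surjectivity of the $\sigma_g$ enters solely through Proposition \ref{crossedproductidealassociative} (to secure ideal associativity). Once those two bookkeeping checks are in place, the chain Proposition \ref{crossedproductidealassociative} $\to$ Proposition \ref{Rsimpleimplies} $\to$ Proposition \ref{crossedproductGinvariant} gives the result immediately.
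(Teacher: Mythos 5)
Your proposal is correct and follows exactly the paper's own route: the paper proves this proposition by citing Proposition \ref{Rsimpleimplies}, Proposition \ref{crossedproductGinvariant} and Proposition \ref{crossedproductidealassociative}, which is precisely the chain you spell out. Your version merely makes explicit the bookkeeping (local unitality of the crossed product, graded ideal associativity implying ideal associativity) that the paper leaves implicit.
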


\begin{proof}
This follows immediately from 
Proposition \ref{Rsimpleimplies},
Proposition \ref{crossedproductGinvariant} and
Proposition \ref{crossedproductidealassociative}.
\end{proof}

\begin{proposition}\label{crossedproductmaximalcommutativegroupoidgraded}
Suppose that $G$ is a groupoid and
$A = B \rtimes_{\alpha}^{\sigma} G$ 
is a crossed product where $B$ is a $G$-simple ring.
{\rm (a)} If $B$ is a maximal commutative subring of $A$, then $A$ is simple;
{\rm (b)} If $G$ is locally abelian and connected, and
for each $e \in \ob(G)$, the ring $Z(A_{G_e})$
is simple, then $A$ is simple. 
\end{proposition}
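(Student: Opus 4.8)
The plan is to deduce both parts from Proposition \ref{propgroupoidsimple}, so the real work is to verify that a crossed product over a groupoid meets the structural hypotheses appearing there. First I would record the consequences of $G$ being a groupoid. Since $\sigma \colon G \to \Ring$ is a functor, each $\sigma_g$ with $g \in \mor(G)$ admits the inverse $\sigma_{g^{-1}}$ and is therefore an isomorphism, in particular surjective. Combining this with the fact that each $\alpha_{g,h}$ is a unit in $B_{c(g)}$, a direct computation on the defining relation \eqref{crossedproductmultiplication} gives $A_g A_h = B_{c(g)} u_{gh} = A_{gh}$ for every composable pair $(g,h)$, so $A$ is strongly graded; by local unitality the strong gradation is then both left and right non-degenerate. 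Surjectivity of the $\sigma_g$ also lets me invoke Proposition \ref{crossedproductidealassociative} to conclude that $A/A_0$ is graded ideal associative. Finally, $G$-simplicity of $B$ together with Proposition \ref{crossedproductGinvariant} says precisely that $A_0 = B$ is $A$-simple. These four facts are the standing input for both parts.

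For part (a) the only hypothesis of Proposition \ref{propgroupoidsimple}(a) not yet in hand is that every intersection of $A_0$ with an ideal of $A$ is an $A$-invariant ideal of $A_0$. I would verify this for an arbitrary ideal $I$ of $A$ by setting $J = I \cap A_0$. Since $d(g), c(g) \in \ob(G)$ give $A_{d(g)}, A_{c(g)} \subseteq A_0$, one has $J_{d(g)} = I_{d(g)}$ and $J_{c(g)} = I_{c(g)}$. Using strong gradation and graded ideal associativity, $A_g J_{d(g)} A_{g^{-1}} \subseteq A_g A_{d(g)} A_{g^{-1}} \subseteq A_{c(g)}$, while $A_g J_{d(g)} A_{g^{-1}} \subseteq I$ because $I$ is an ideal; hence $A_g J_{d(g)} A_{g^{-1}} \subseteq I \cap A_{c(g)} = J_{c(g)}$. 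By Proposition \ref{Ainvariantequivalent}(b) this means $J$ is $A$-invariant. With $A_0$ maximal commutative (the hypothesis of (a)), $A_0$ $A$-simple, and the non-degeneracy already noted, Proposition \ref{propgroupoidsimple}(a) applies and yields simplicity of $A$.

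For part (b) no further computation is needed. The ring $A$ is a locally unital groupoid graded ring with $A_0$ $A$-simple, it is strongly graded by the locally abelian connected groupoid $G$ (the hypothesis of (b)), the extension $A/A_0$ is graded ideal associative, and $Z(A_{G_e})$ is simple for each $e \in \ob(G)$ by assumption. Thus Proposition \ref{propgroupoidsimple}(b) applies directly and $A$ is simple.

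The routine ingredients are the two short computations establishing strong gradation and the intersection condition; the genuine content lies in observing that the groupoid hypothesis forces every $\sigma_g$ to be an isomorphism, which is exactly what makes the crossed product strongly graded and graded ideal associative and thereby unlocks Proposition \ref{propgroupoidsimple}. I expect the intersection-condition step in part (a) to be the main thing to get right, since it is the one hypothesis of Proposition \ref{propgroupoidsimple}(a) that is not immediate from the crossed product axioms, and it is precisely there that strong gradation by a groupoid --- equivalently the existence of $g^{-1}$ and the invertibility of $\alpha_{g,g^{-1}}$ --- is essential.
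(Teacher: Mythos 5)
Your proof is correct, and its skeleton is the same as the paper's: reduce both parts to Proposition \ref{propgroupoidsimple}, observing that the groupoid structure makes every $\sigma_g$ an isomorphism (hence surjective), that the crossed product is strongly graded and, being locally unital, left and right non-degenerate, and that $G$-simplicity of $B$ translates into $A$-simplicity of $A_0$ via Proposition \ref{crossedproductGinvariant}; your part (b) is verbatim the paper's. The one genuine difference is in the invariance check of part (a). The paper verifies that $J = I \cap B$ is \emph{$G$-invariant} by the explicit crossed-product computation $\sigma_g(I_{d(g)}) = \sigma_g(I_{d(g)})\,\alpha_{g,g^{-1}}\alpha_{g,g^{-1}}^{-1} = u_g I_{d(g)} u_{g^{-1}} \alpha_{g,g^{-1}}^{-1} \subseteq I \cap B_{c(g)}$, and then converts this to $A$-invariance via Proposition \ref{crossedproductGinvariant}; this only needs part (a) of Proposition \ref{Ainvariantequivalent} and makes no use of graded ideal associativity in this step. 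You instead verify \emph{$A$-invariance} directly through the criterion of Proposition \ref{Ainvariantequivalent}(b), computing $A_g J_{d(g)} A_{g^{-1}} \subseteq I \cap A_{c(g)} = J_{c(g)}$, which obliges you to first establish strong gradation and graded ideal associativity (via Proposition \ref{crossedproductidealassociative}) as standing hypotheses. Your route is purely graded-theoretic and never touches the twisting $\alpha$ explicitly, which makes it somewhat more transparent; the paper's route is shorter in total, since the $u_g$-conjugation computation replaces both your invariance check and the appeal to graded ideal associativity in part (a). Both checks are sound --- in particular your containment $A_g J_{d(g)} A_{g^{-1}} \subseteq A_{c(g)}$ uses only the filter property under either parenthesization, and $A_g J_{d(g)} A_{g^{-1}} \subseteq I$ holds for both bracketings since $I$ is a two-sided ideal --- so nothing is missing.
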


\begin{proof}
First note that since $G$ is a groupoid,
every $\sigma_g$, for $g \in \mor(G)$,
is an isomorphism and hence, in 
particular, a surjection.
Indeed, 
$\sigma_g \sigma_{g^{-1}} = 
\sigma_{c(g)} = \id_{B_{c(g)}}$
and $\sigma_{g^{-1}} \sigma_g = 
\sigma_{d(g)} = \id_{B_{d(g)}}$.

(a) Let $I$ be an ideal of $A$.
By Proposition \ref{propgroupoidsimple} and
Proposition \ref{crossedproductGinvariant}
we are done if we can show that $I \cap B$
is $G$-invariant. If $g \in \mor(G)$,
then $\sigma_g (I_{d(g)}) =
\sigma_g( I_{d(g)} ) 
\alpha_{g,g^{-1}} \alpha_{g,g^{-1}}^{-1} =
u_g I_{d(g)} u_{g^{-1}} \alpha_{g,g^{-1}}^{-1} 
\subseteq I \cap B_{c(g)}  = I_{c(g)}$.

(b) This follows immediately from
Proposition \ref{propgroupoidsimple},
Proposition \ref{crossedproductGinvariant} and
Proposition \ref{crossedproductidealassociative}.
\end{proof}

\begin{remark}\label{twodifferentproducts}
In the definition of crossed product algebras
one may loosen the definition of the product
\eqref{crossedproductmultiplication} in the 
following way. For each
$(g,h) \in G^{(2)}$ we let $*_{g,h}$
denote either the ordinary multiplication
on $B_{c(g)}$ or the opposite multiplication on $B_{c(g)}$.
Then we define
$(au_g)(b u_h) = a *_{g,h} \sigma_g(b) \alpha_{g,h} u_{gh},$
for $(g,h) \in G^{(2)}$, $a \in B_{c(g)}$ and
$b \in B_{c(h)}$. 
One may also start with a functor $\sigma$ from $G$
to the larger category $\Ring'$ with unital rings as objects 
and as morphisms functions that are
ring homomorphisms or ring anti-homomorphisms.
It is straightforward to verify that 
Proposition \ref{crossedproductsimpleimplies} and
Proposition \ref{crossedproductmaximalcommutativegroupoidgraded}
also hold in these cases.
\end{remark}

\begin{example}
There are group graded rings that are not
in a natural way crossed products by groups.
This serves as motivation for the results in Section
\ref{categorygradedrings}.
To exemplify this, let $B$ be an associative,
commutative and unital ring.
Then the ring $A = M_3(B)$ can 
be strongly graded by ${\Bbb Z}_2 = \{ 0 , 1 \}$
by putting
\[ 
A_0 = 
\left( 
\begin{array}{ccc}
B & B & 0 \\
B & B & 0 \\
0 & 0 & B 
\end{array} 
\right) 
\quad
\mbox{and}
\quad
A_1 =
\left( 
\begin{array}{ccc}
0 & 0 & B \\
0 & 0 & B \\
B & B & 0 
\end{array} 
\right). 
\] 
With respect to this gradation,
$A$ is not a crossed product.
Indeed, if there were $u$ and $v$ in $A_1$
satisfying $uv = 1_A$, then we would get
a contradiction, 
since the determinant of every
element in $A_1$ is zero.
Furthermore, it is clear that every ideal 
of $A_0$ is of the form 
\[ 
I_{J,K} = 
\left( 
\begin{array}{ccc}
J & J & 0 \\
J & J & 0 \\
0 & 0 & K 
\end{array} 
\right) 
\]
for ideals $J$ and $K$ of $B$.
By Proposition \ref{Ainvariantequivalent}(b)
we get that $I_{J,K}$ is $A$-invariant
if and only if $A_1 I_{J,K} A_1 \subseteq I_{J,K}$.
It is easy to check that
this holds precisely when $J=K$.
So $A_0$ is $A$-simple if and only if 
$B$ is simple.
Therefore, by Proposition \ref{Rsimpleimplies} 
and Proposition \ref{propgroupoidsimple},
we get that $A$ is simple precisely when $A_0$
is $A$-simple.
\end{example}

\subsection{Cayley-Dickson Doublings}\label{applicationcayley}

In this section, we analyze simplicity of
Cayley-Dickson doublings (see Proposition \ref{simplecayley}).
We also show how to construct an infinite
chain of simple rings using this procedure
(see Example \ref{examplecayley}).

\begin{definition}
By a {\it Cayley-Dickson doubling} we mean
a crossed product algebra $A = B \rtimes_{\alpha}^{\sigma} G$,
where $G$ is a group of order two and $B$ is a unital ring.
We let the identity element of $G$ be denoted 
by $e$ and suppose that $g$ is a generator for $G$.
Suppose that $\sigma$ is a functor from $G$
(considered as a one-object category) to $\Ring'$.
We let $\sigma_g$ be denoted by $\sigma$
and we let $B_e$ be denoted by $B$.
Then $\sigma : B \rightarrow B$ is
either a ring automorphism or a ring
anti-automorphism satisfying $\sigma^2 = \id_B$.
We let $\alpha$ denote $\alpha_{g,g}$
and we suppose that $\alpha$ is a unit in $B$. 
We let $1$ denote $1_B u_e$.
By Remark \ref{twodifferentproducts},
the product in $A$ is given by
\begin{align*}
(a + b u_g) (c + d u_g) = 
a *_{e,e} c + b *_{g,g} \sigma(d) \alpha + 
(a *_{e,g} d + b *_{g,e} \sigma(c))u_g
\end{align*}
for $a,b,c,d \in B$.
An ideal $I$ of $B$ is called
$\sigma$-invariant if $\sigma(I) \subseteq I$.
The ring $B$ is called $\sigma$-simple
if there is no non-trivial $\sigma$-invariant
ideal of $B$.
By a {\it classical Cayley-Dickson doubling} we mean
a doubling where
$\sigma$ is a ring anti-automorphism of $B$ and
$a *_{e,e} b = ab$, $a *_{e,g} b = ba$,
$a *_{g,e} b = ab$ and $a *_{g,g} b = ba$
for all $a,b \in B$.
Recall that in this case $\sigma$
can be extended to a 
to a ring 
anti-automorphism of $A$ by the relation
$\sigma (a + b u_g) = \sigma(a) - bu_g$
for all $a,b \in B$. 
\end{definition}

\begin{proposition}\label{simplecayley}
Suppose that $A$ is a (classical)
Cayley-Dickson doubling of $B$.
{\rm (a)} If $A$ is simple ($\sigma$-simple), then $B$ is $\sigma$-simple.
{\rm (b)} If $B$ is $\sigma$-simple and $Z(A)$
is simple, then $A$ is simple.
\end{proposition}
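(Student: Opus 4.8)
The plan is to treat the two readings of (a) separately and to obtain (b) from the groupoid-graded machinery of the previous section. Throughout I would first record some preliminary observations. Since $\sigma$ is either a ring automorphism or a ring anti-automorphism with $\sigma^2 = \id_B$, it is bijective and in particular surjective. As $G$ is the group of order two, with single object $e$, an ideal $I$ of $B$ is $G$-invariant precisely when $\sigma(I) \subseteq I$, that is exactly when it is $\sigma$-invariant; by Proposition \ref{crossedproductGinvariant} this is in turn equivalent to $A$-invariance, so $G$-simplicity and $\sigma$-simplicity of $B$ coincide. Finally, because each $\sigma_g$ is surjective, Proposition \ref{crossedproductidealassociative} (together with Remark \ref{twodifferentproducts} in the anti-automorphism case) guarantees that $A/B$ is graded ideal associative, hence ideal associative.

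For the non-parenthetical reading of (a) the hypothesis is that $A$ is simple. Since every $\sigma_g$ is surjective, Proposition \ref{crossedproductsimpleimplies} (again extended via Remark \ref{twodifferentproducts} when $\sigma$ is an anti-automorphism) shows that $B$ is $G$-simple, which is the same as $\sigma$-simple by the preceding paragraph. This disposes of the general case with no further work.

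For the parenthetical (classical) reading of (a) I would argue by contraposition. Suppose $I$ is a non-trivial $\sigma$-invariant ideal of $B$. By the first paragraph $I$ is $A$-invariant, so Proposition \ref{idealassociative} shows that $IA$ is an ideal of $A$. A direct computation with the classical product, using that $\sigma(I) \subseteq I$ and that $I$ is an ideal of $B$, gives $IA = I \oplus I u_g$; this is non-zero since $I \neq \{0\}$ and proper since $1 = 1_B u_e \notin IA$ when $I \neq B$. The key point is that $IA$ is stable under the anti-automorphism $\sigma$ of $A$: from $\sigma(a + b u_g) = \sigma(a) - b u_g$ and $\sigma(I) \subseteq I$ we get $\sigma(I \oplus I u_g) = \sigma(I) \oplus (-I) u_g \subseteq I \oplus I u_g$. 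Thus $IA$ is a non-trivial $\sigma$-invariant ideal of $A$, so $A$ is not $\sigma$-simple, which is the contrapositive of the claim.

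For (b) I would apply the groupoid-graded result directly. The group $G$ of order two is abelian, hence locally abelian, and connected, and the crossed product is strongly graded with $A/B$ graded ideal associative by the first paragraph. Since $G$ has the single object $e$ we have $A_{G_e} = A$, and therefore $Z(A_{G_e}) = Z(A)$, which is simple by hypothesis, while $B$ is $G$-simple because it is $\sigma$-simple. Proposition \ref{crossedproductmaximalcommutativegroupoidgraded}(b) (valid in the present setting by Remark \ref{twodifferentproducts}) then yields simplicity of $A$, and the classical case is a special instance requiring no separate treatment. The main obstacle in the whole argument is the classical part of (a): one must identify the ideal $IA$ explicitly as $I \oplus I u_g$ and verify, in the non-associative classical doubling, that it is proper and stable under the extended anti-automorphism, since everything else reduces to citing the crossed-product propositions already established.
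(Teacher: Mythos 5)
Your proof is correct and follows essentially the same route as the paper's: the non-parenthetical reading of (a) via Proposition \ref{crossedproductsimpleimplies}, the parenthetical (classical) reading via the observation that $IA = I \oplus I u_g$ is a non-trivial ideal of $A$ stable under the extended anti-automorphism $\sigma$, and (b) via Proposition \ref{crossedproductmaximalcommutativegroupoidgraded}(b). You merely make explicit some details the paper leaves implicit (the identification $IA = I \oplus I u_g$, properness and non-triviality of $IA$, and the appeal to Remark \ref{twodifferentproducts} for the twisted product and anti-automorphism case), which is harmless.
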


\begin{proof}
(a) follows from Proposition \ref{crossedproductsimpleimplies}.
Indeed, if $I$ is a $\sigma$-invariant
ideal of $B$, then $IA$ is also a $\sigma$-invariant
ideal of $A$, since
$\sigma(IA) =
\sigma(I + Iu_g) = \sigma(I) - Iu_g = I + Iu_g = IA$.
(b) follows from Proposition 
\ref{crossedproductmaximalcommutativegroupoidgraded}.
\end{proof}

\begin{example}\label{examplecayley}
Now, suppose that we are given 
an infinite tower of ring extensions
$B_0 \subseteq B_1 \subseteq \ldots$
and units $\alpha_i \in B_i$, for $i \geq 0$,
such that $\alpha_i$ commutes and associates
with any two elements of $B_i$.
Furthermore, suppose that we for each
$i \geq 0$ are given
a ring anti-automorphism $\sigma_i$ of $B_i$
and that the rings $B_0, B_1, B_2, \ldots$ are defined
recursively by the relations
$B_0 = B$ and $B_{i+1}$ equals
the classical Cayley-Dickson doubling of $B_i$
defined by $\sigma_i$ and $\alpha_i$.
By Proposition \ref{simplecayley}(b) we get:
\begin{itemize}
\item If $B_0$ is $\sigma_0$-simple and 
$Z(B_i)$ is simple for each $i \geq 1$,
then $B_i$ is simple for each $i \geq 1$.
\end{itemize}
Starting with the real numbers $B_0 = {\Bbb R}$
equipped with the trivial action of $\sigma$
and putting $\alpha_i = -1$, for $i \geq 0$,
we get successively, $B_1$ the complex numbers
${\Bbb C}$, $B_2$ Hamilton's quaternions ${\Bbb H}$,
$B_3$ the octonions ${\Bbb O}$, $B_4$ the sedenions ${\Bbb S}$
and so on. 
It is easy to check that the
center of each of these rings, except for $B_1={\Bbb C}$, equals
the real numbers and is hence a simple
ring. We therefore, as a special case,
get the following well-known result.
\begin{itemize}
\item $B_1 = {\Bbb C}$ is simple and all of the rings 
$B_0 = {\Bbb R}$, $B_2 = {\Bbb H}$, $B_3 = {\Bbb O}$, $B_4 = {\Bbb S}$, 
etc. are central simple ${\Bbb R}$-algebras.
\end{itemize}
The question of when a twisted Cayley-Dickson doubling of a 
quaternion algebra is a division algebra
has been studied before (see e.g. \cite{pumplun}).
This of course implies simplicity, 
but it seems to the authors of the present article that the above result concerning
simplicity of the whole chain of 
Cayley-Dickson doublings is new.
\end{example}

\subsection{Twisted Group Rings}

In this section, we obtain a simplicity 
result (see Proposition \ref{simpletwisted}) for
twisted group rings.
This provides us with a different proof
of simplicity of the Cayley-Dickson doubling
algebras from Example \ref{examplecayley}.

\begin{definition}
Let $G$ be a group with identity element $e$.
By a twisted group ring we mean a crossed product 
$A = B \rtimes_{\alpha}^{\sigma} G$ where
$B$ is a unital, not necessarily associative,
ring, $\sigma_g = {\rm id}_B$, for $g \in G$,
and $\alpha$ is a map from $G \times G$ to 
the units of $B$ such that 
$\alpha_{e,g}=\alpha_{g,e}=1_B$, for $g \in G$,
and each $\alpha_{g,h}$, for $g,h \in G$,
associates and commutes with all elements of $B$.
In that case we write $A = B \rtimes_{\alpha} G$.
\end{definition}

\begin{proposition}\label{simpletwisted}
Let $A = B \rtimes_{\alpha} G$ be a twisted 
group ring, where $G$ is abelian, and
$B$ and $Z(B)$ are simple.
If for each non-identity $g \in G$,
there is a non-identity $h \in G$ such that
$\alpha_{g,h} - \alpha_{h,g}$ is a unit in $B$,
then $A$ is simple.
\end{proposition}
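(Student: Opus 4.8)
The plan is to apply the machinery of Theorem \ref{secondtheorem} with $B$ in the role of both the base ring and its own centralizer-target, mirroring the structure of Proposition \ref{abeliandegreemap}. First I would set up the ambient framework: since $A = B \rtimes_\alpha G$ is a twisted group ring, the maps $\sigma_g$ are all $\id_B$, hence trivially surjective, and $G$ abelian means we are in precisely the situation where the degree map of Proposition \ref{abeliandegreemap} is available, provided we can verify its hypotheses. I would take the degree map $d(a) = |\Supp(a)|$ and let $X = \cup_{g \in G} A_g$ be the homogeneous elements. The goal is to check conditions (i)--(iv) of Theorem \ref{secondtheorem}, with the key point being the identification of the centralizer $C_A(B)$.

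The central computation is to determine $C_A(B)$. Since each $\sigma_g = \id_B$ and $G$ is abelian, a homogeneous element $c\,u_g$ commutes with all of $B$ essentially automatically from the twisting relations, so $C_A(B)$ will contain $Z(B) \rtimes_\alpha G$ or something close; conversely, an element of $C_A(B)$ must have each homogeneous component commuting with $B$, forcing those components to lie over $Z(B)$. The hypothesis that $Z(B)$ is simple should then feed into condition (i), namely that $C_A(B)$ is $A$-simple. For condition (iii), $A\,C_A(B)\,A = A$, I would use that $B$ (hence $Z(B)$) is unital and that $1 \in C_A(B)$, so $A \cdot 1 \cdot A \supseteq A$. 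Condition (ii), that intersections of $C_A(B)$ with ideals of $A$ are $A$-invariant, should follow from the graded structure together with Proposition \ref{Ainvariantequivalent}, since everything here is strongly graded by a group.

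The genuinely new hypothesis — that for each non-identity $g$ there is a non-identity $h$ with $\alpha_{g,h} - \alpha_{h,g}$ a unit in $B$ — is what I expect to carry the crux of the argument, and I believe it enters through verifying the degree map's reduction step (d2) in the non-trivial direction, or else in pinning down that $C_A(B)$ is small enough to be $A$-simple. The commutator $(c\,u_g)(c'\,u_h) - (c'\,u_h)(c\,u_g)$ expands, using $\sigma = \id$ and the twisting relation \eqref{crossedproductmultiplication}, to $c c'(\alpha_{g,h} - \alpha_{h,g})\,u_{gh}$ since $G$ is abelian so $gh = hg$; the unit hypothesis guarantees this commutator is nonzero whenever the relevant components are nonzero, which is exactly what forces a supported-off-identity element to fail to be central and thereby drives the degree reduction. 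So the main obstacle will be organizing the commutator calculation so that, starting from any nonzero ideal element, the unit condition lets us strictly decrease the support size while landing back inside the ideal — making the centralizer meet every ideal nontrivially (the ideal intersection property of Lemma \ref{thirdtheorem}) and then closing the argument via $A$-simplicity of $C_A(B)$.
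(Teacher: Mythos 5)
Your key commutator computation is correct, and it is precisely where the paper uses the unit hypothesis: writing $x=\sum_{p\in G}b_pu_p\in Z(A)$ and comparing $xu_h$ with $u_hx$ (using $\sigma_g=\id_B$ and $G$ abelian) yields $b_g\alpha_{g,h}=b_g\alpha_{h,g}$, so the unit $\alpha_{g,h}-\alpha_{h,g}$ forces $b_g=0$ for every non-identity $g$; hence $Z(A)=Z(B)$, which is simple by hypothesis. The paper then simply invokes Proposition \ref{crossedproductmaximalcommutativegroupoidgraded}(b), noting that $B$ is $G$-simple because $\sigma_g=\id_B$ makes $G$-simplicity the same as plain simplicity of $B$. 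In particular, the unit hypothesis enters \emph{only} in identifying $Z(A)$; it plays no role in the degree-map reduction (d2), which in Proposition \ref{abeliandegreemap} is carried out using $A$-simplicity of the degree-zero component to normalize a coefficient to $1_A$. So your first guess about where the hypothesis acts is misplaced, while your hedged alternative (``pinning down that the centralizer is small'') is the correct one --- but for the centralizer $Z(A)$, not $C_A(B)$.

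The genuine gap is an inconsistency in your setup that leaves condition (i) of Theorem \ref{secondtheorem} unproved. A degree map for the extension $A/B$ requires, by (d2), a set $X\subseteq B$ generating $B$; your choice $X=\cup_{g\in G}A_g$ is not contained in $B$, so what you actually have is a degree map for $A/A$, whose associated centralizer is $C_A(A)=Z(A)$, not $C_A(B)$. If instead you keep the extension $A/B$, then indeed $C_A(B)=Z(B)\rtimes_{\alpha}G$ (your observation that each homogeneous component of a centralizing element must have coefficient in $Z(B)$ is right), but the claim that simplicity of $Z(B)$ ``feeds into'' $A$-simplicity of $Z(B)\rtimes_{\alpha}G$ is unsupported: a twisted group ring over a simple coefficient ring need not be simple --- that is the very statement being proved --- and establishing it would force you to re-run the entire argument with $Z(B)$ in place of $B$. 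The repair is to commit to $B:=A$ throughout: $A$ is unital and strongly graded (since $u_gu_{g^{-1}}=\alpha_{g,g^{-1}}u_e$ with $\alpha_{g,g^{-1}}$ a unit), $A/A_e$ is graded ideal associative by Proposition \ref{crossedproductidealassociative} because each $\sigma_g=\id_B$ is surjective, and $A_e=B$ is $A$-simple because $B$ is simple, so Proposition \ref{abeliandegreemap} applies; then your commutator argument gives $Z(A)=Z(B)$, after which conditions (i)--(iii) of Theorem \ref{secondtheorem} are immediate from simplicity of $Z(A)$ and unitality of $A$. This repaired route is exactly the paper's proof, packaged through Proposition \ref{crossedproductmaximalcommutativegroupoidgraded}(b).
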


\begin{proof}
By Proposition \ref{crossedproductmaximalcommutativegroupoidgraded}(b)
it is enough to show that $Z(A) = Z(B)$.
The inclusion $Z(A) \supseteq Z(B)$ is trivial.
Now we show the inclusion $Z(A) \subseteq Z(B)$.
To this end, suppose that 
$x = \sum_{p \in G} b_p u_p$ belongs to $Z(A)$,
where $b_p \in B$ is zero
for all but finitely many $p \in G$.
Fix a non-identity $g \in G$.
Choose a non-identity $h \in G$
such that $\alpha_{g,h} - \alpha_{h,g}$ is a unit in $B$.
From the relation $x u_h = u_h x$ and
the fact that $G$ is abelian, we get that
$b_g \alpha_{g,h} = b_g \alpha_{h,g}$,
which, since $\alpha_{g,h}-\alpha_{h,g}$
is a unit in $B$, implies that $b_g=0$.
Therefore $x \in B$.
But since $B \subseteq A$ and $x \in Z(A)$, 
we get that $x \in Z(B)$.
\end{proof}

\begin{example}
Now we can use Proposition \ref{simpletwisted}
to deduce simplicity of the algebras 
${\Bbb R}$, ${\Bbb C}$, ${\Bbb H}$, 
${\Bbb O}$, ${\Bbb S}$ etc. (see Example \ref{examplecayley})
in a different way.
In fact, Bales \cite{bales} has shown that
the $n$th Cayley-Dickson doubling of ${\Bbb R}$
is a twisted group algebra over the group ${\Bbb Z}_2^{2^n}$
with $\alpha : {\Bbb Z}_2^{2^n} \times {\Bbb Z}_2^{2^n} \rightarrow \{ 1 , -1 \}$
defined recursively by the following five relations
\begin{align*}
\alpha(p,0)=\alpha(0,p)=1, \quad
 \alpha(2p,2q)=\alpha(p,q), \quad
\alpha(1,2q+1)=-1, \\
 \alpha(2p,2q+1) = -\alpha(p,q), \quad \mbox{and} \quad
\alpha(2p+1,2q+1)= \alpha(q,p), \quad \mbox{for $p \neq 0$,}
\end{align*}
for integers $p$ and $q$.
In particular, this implies that 
$\alpha$ is anti-commutative,
in the sense that $\alpha_{p,q} = -\alpha_{q,p}$ holds
for all non-zero $p,q \in {\Bbb Z}_2^{2^n}$.
Hence, for all non-zero $p,q \in {\Bbb Z}_2^{2^n}$,
we get that $\alpha_{p,q} - \alpha_{q,p}$ equals
either $2$ or $-2$, which, in either case,
is a real unit.
\end{example}

\subsection{Matrix Algebras}\label{applicationmatrix}

In this section, we obtain a
simplicity result for skew and
twisted matrix algebras 
(see Proposition \ref{simplematrix}).

\begin{definition}\label{defmatrix}
Suppose that $I$ is a (possibly infinite) set and let
$G$ be the groupoid with $\ob(G) = I$
and $\mor(G) = I \times I$.
For $i,j \in I$, we put
$d(i,j)=j$ and $c(i,j)=i$ so we can write
$(i,j) : j \rightarrow i$.
If $i,j,k \in I$, then 
 $(i,j)(j,k)$ is defined to be $(i,k)$.
Suppose that we are given unital rings 
$B_i$, for $i \in I$, and ring isomorphisms
$\sigma_{ij} : B_j \rightarrow B_i$,
for $i,j \in I$, such that 
$\sigma_{ii} = \id_{B_i}$, for $i \in I$,
and $\sigma_{ij}\sigma_{jk} = \sigma_{ik}$
for all $i,j,k \in I$.
Suppose also that we, for each triple 
$(i,j,k) \in I \times I \times I$, are given
a unit $\alpha_{ijk} \in B_i$,
with $\alpha_{ijk}=1_{B_i}$ 
whenever $i=j$ or $j=k$, 
that commute and associate with any two elements of $B_i$.
For such a triple,
we let $*_{ijk}$
denote either the ordinary multiplication
on $B_i$ or the opposite multiplication on $B_i$.
Then the crossed product 
$A = B \rtimes_{\alpha}^{\sigma} G$
can be interpreted as the set of matrices,
over the index set $I$ with entries in
the rings $B_i$, with a product which is
both ''skew'' and ''twisted''
by the $\alpha_{ijk}$'s, the $*_{ijk}$'s
and the $\sigma_{ij}$'s and we write
$A = M_I(B)$.
\end{definition}

\begin{proposition}\label{simplematrix}
With the above notation, the ring $M_I(B)$
is simple if and only if for each $i \in I$,
the ring $B_i$ is simple.
In particular, all of the rings
$M_I({\Bbb R})$, 
$M_I({\Bbb C})$,
$M_I({\Bbb H})$,
$M_I({\Bbb O})$,
$M_I({\Bbb S})$ etc.
are simple.
\end{proposition}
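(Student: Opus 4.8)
The plan is to reduce the statement to the groupoid-graded simplicity machinery already developed, specifically Proposition \ref{localsimplicity} together with Proposition \ref{simplecayley}(b) or, more directly, the connected-groupoid criterion of Proposition \ref{crossedproductmaximalcommutativegroupoidgraded}(b). Recall from Definition \ref{defmatrix} that $M_I(B) = B \rtimes_{\alpha}^{\sigma} G$ is a crossed product where $G$ is the groupoid with object set $I$ and morphism set $I \times I$, and that this gradation makes $A$ locally unital. The key structural observation is that $G$ is a connected groupoid: for any two objects $i,j \in I$ there is a morphism $(i,j) : j \to i$, so all objects are isomorphic. Moreover, for each fixed object $i \in I$, the local group $G_i$ consists only of the identity morphism $(i,i)$, so the local ring $A_{G_i}$ is just $A_{(i,i)} = B_i u_{(i,i)} \cong B_i$.

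First I would prove the ``if'' direction. Assume each $B_i$ is simple. Since $A_{G_i} \cong B_i$ is simple for every $i \in \ob(G)$, and $G$ is a connected groupoid, Proposition \ref{localsimplicity} applies directly (using that $A$ is locally unital and strongly graded) to conclude that $A = M_I(B)$ is simple. One should check that the gradation is genuinely strong here, i.e. that $A_g A_h = A_{gh}$ for composable $g,h$; this follows because each $\sigma_{ij}$ is an isomorphism and each $\alpha_{ijk}$ is a unit, so the relevant products of homogeneous components fill out the target component. This is the easier direction and is essentially an invocation of the already-proven local simplicity result.

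For the ``only if'' direction, suppose $M_I(B)$ is simple, and fix $i \in I$; I must show $B_i$ is simple. The natural approach is to take a non-trivial ideal $J$ of $B_i$ and manufacture a non-trivial ideal of $A$, contradicting simplicity. Concretely, I would consider the ideal of $A$ generated by $J u_{(i,i)}$ and show that it meets $A_{(i,i)} = B_i u_{(i,i)}$ in exactly $J u_{(i,i)}$, so that a proper non-zero $J$ yields a proper non-zero ideal of $A$. The cleanest way to organize this is via $A$-simplicity: since $A_{G_i} \cong B_i$, showing $B_i$ is simple amounts to showing $A_{(i,i)}$ is $A_{G_i}$-simple, and because $G_i$ is trivial this is just ordinary simplicity of the local ring, which the argument in the proof of Proposition \ref{propgroupoidsimple}(b) already isolates. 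Alternatively, one invokes Proposition \ref{crossedproductsimpleimplies} to get that $B = \oplus_i B_i$ is $G$-simple, and then translates $G$-simplicity of $B$ into simplicity of each factor $B_i$ using the connectedness of $G$ and the fact that the $\sigma_{ij}$ are isomorphisms identifying the factors.

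The main obstacle I anticipate is the bookkeeping in the ``only if'' direction: one must verify that the ideal generated inside the full matrix-like algebra $A$ by a homogeneous piece supported at $(i,i)$ does not pull extra elements back into the $(i,i)$-component beyond those forced by $J$ itself. This requires using, in an essential way, that $\sigma_{ij}\sigma_{jk} = \sigma_{ik}$ and $\sigma_{ii} = \id_{B_i}$, together with the unit property and the commuting/associating hypotheses on the $\alpha_{ijk}$, to control how conjugation $A_g (-) A_{g^{-1}}$ transports an ideal of $B_j$ to an ideal of $B_i$. Since all the technical lemmas governing these transport maps (Proposition \ref{Ainvariantequivalent} and Proposition \ref{crossedproductidealassociative}) are already available, I expect this to reduce to routine verification once the correct ideal is exhibited. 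The final sentence of the statement, simplicity of $M_I(\R), M_I(\C), M_I(\mathbb{H}), M_I(\mathbb{O}), M_I(\mathbb{S})$, then follows immediately since each of $\R, \C, \mathbb{H}, \mathbb{O}, \mathbb{S}$ is simple (by Example \ref{examplecayley}), so the uniform base ring case $B_i = B$ gives simplicity of $M_I(B)$.
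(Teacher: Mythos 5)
Your proposal is correct and takes essentially the same route as the paper: the ``if'' direction is Proposition \ref{localsimplicity} applied to the connected groupoid $G$ with trivial isotropy groups (so $A_{G_i} \cong B_i$), and for the ``only if'' direction your second, ``alternative'' argument---invoking Proposition \ref{crossedproductsimpleimplies} to get $G$-simplicity of $B$ and then translating this, via the triviality of each $G_e$ and the isomorphisms $\sigma_{ij}$, into simplicity of every $B_i$---is exactly the paper's proof, making the direct ideal-generation bookkeeping you sketch first unnecessary. The final assertion follows, as you say, from Example \ref{examplecayley}.
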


\begin{proof}
Note that since each each $G_e$, for $e \in \ob(G)$,
is a trivial group, we get that $B$ is $G$-simple 
precisely when each $B_i$, for $i \in I$, is simple.
Therefore, the first part of the result follows from
Proposition \ref{localsimplicity} and Proposition \ref{crossedproductsimpleimplies}.
The second part of the result follows
from the first part and Example \ref{examplecayley}.
\end{proof}

\subsection{Topological Dynamics}\label{applicationtopological}

In this section, we specialize the simplicity
results for crossed products obtained
earlier to show a simplicity
result for skew group rings, over non-associtive rings,
associated with topological dynamical systems
(see Proposition \ref{simplegroupdynamics}).

\begin{definition}
Let $G$ be a group with identity element $e$.
By a \emph{skew group ring} we mean a crossed product 
$A = B \rtimes_{\alpha}^{\sigma} G$ where
$B$ is a unital, not necessarily associative,
ring, $\sigma_g$ is a ring automorphism of $B$, for $g \in G$,
and $\alpha_{g,h}=1_B$, for $g,h \in G$.
In that case we write $A = B \rtimes^{\sigma} G$.
\end{definition}

\begin{definition}
Let $G$ be a group and
suppose that $X$ is a compact Hausdorff
space.
Let $\Aut(X)$ denote the group of
homeomorphisms of $X$. 
By a \emph{topological dynamical system}
we mean a group homomorphism $s : G \rightarrow \Aut(X)$.
A subset $Y$ of $X$ is called \emph{$G$-invariant} if
$s(g)(Y) \subseteq Y$ holds for all $g \in G$. 
The topological dynamical system $s$
is said to be \emph{minimal} if there is no
closed non-empty proper invariant subset of $X$.
Furthermore, $s$ is called {\it faithful}
if there to each non-identity $g \in G$, 
is some $x \in X$ such that $s(g)(x) \neq x$.
For every non-negative integer $i$,
let $C_i$ denote the  $i$th Cayley-Dickson
doubling starting with $C_0 = {\Bbb R}$
and taking $\alpha = -1$ at each step.
Then we get $C_1 = {\Bbb C}$,
$C_2 = {\Bbb H}$, $C_3 = {\Bbb O}$, $C_4 = {\Bbb S}$
and so on.
By abuse of notation we let
the norm (respectively involution) 
on each $C_i$, for $i \geq 0$,
be denoted by $| \cdot |$ (respectively $\overline{\cdot}$).
For each $i \in I$, let $B_i$ denote the algebra of 
$C_i$-valued continuous functions on $X$.
The addition and multiplication on $B_i$ are
defined pointwise and the elements of $C_i$
are identified with the constant functions
from $X$ to $C_i$. We define a norm $\| \cdot \|$
on each $B_i$, for $i \geq 0$, by 
$\| f \| = \sup_{x \in X} | f(x) |$, for $f \in B_i$.
For each $i \geq 0$, the group homomorphism $s$ 
induces a group homomorphism
$\sigma : G \rightarrow \Aut(B_i)$ by
$\sigma_g(f)(x) = f(s(g^{-1})(x))$,
for $g \in G$, $f \in B_i$ and $x \in X$.
This allows us to define
the skew group rings $B_i \rtimes^{\sigma} G$,
for $i \geq 0$.
\end{definition}

Öinert \cite{oinert09,oinertarxiv11} has shown 
that the following result holds in the case $i=1$.

\begin{proposition}\label{simplegroupdynamics}
If $G$ is abelian, then for each $i\geq 0$, the skew group ring
$B_i \rtimes^{\sigma} G$ is simple if 
and only if $s$ is minimal and faithful.
\end{proposition}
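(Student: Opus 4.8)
The plan is to reduce both implications to the crossed-product criteria already proved, by building a dictionary between the ideal theory of $B_i = C(X,C_i)$ and the topology of $(X,s)$. I regard $G$ as a connected one-object groupoid, so that its only isotropy group is $G$ itself and $A_{G_e}=A$, whence $Z(A_{G_e})=Z(A)$; as $G$ is abelian it is locally abelian, and since $\sigma$ takes values in automorphisms every $\sigma_g$ is surjective, so that Proposition \ref{crossedproductidealassociative} and Proposition \ref{crossedproductGinvariant} are available and $A/B_i$ is graded ideal associative (and strongly graded). The two engines will be Proposition \ref{crossedproductsimpleimplies} (for the necessity of $G$-simplicity) and Proposition \ref{crossedproductmaximalcommutativegroupoidgraded}(b) (for sufficiency, once $Z(A)$ is known to be simple).

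First I would prove the dictionary lemma that $B_i$ is $G$-simple if and only if $s$ is minimal; by Proposition \ref{crossedproductGinvariant} this is the same as $A$-simplicity of $B_i$. A proper nonempty closed $G$-invariant set $Y\subsetneq X$ yields the proper nonzero $G$-invariant ideal $\{f\in B_i : f|_Y=0\}$ (invariance because $\sigma_g(f)=f\circ s(g^{-1})$ and $s(g^{-1})Y\subseteq Y$), so $G$-simplicity forces minimality. Conversely, given a proper nonzero $G$-invariant ideal $I$, I would consider its hull $Y=\{x\in X : f(x)=0 \text{ for all } f\in I\}$, a closed $G$-invariant set that is proper because $I\neq 0$; the point is that $Y$ is nonempty, for otherwise compactness yields $f_1,\dots,f_m\in I$ with no common zero, and then $g=\sum_k \overline{f_k}\,f_k\in I$ is the real-valued function $x\mapsto \sum_k |f_k(x)|^2$, which is strictly positive, hence invertible in $B_i$ with central (real-valued) inverse, forcing $1\in I$ and $I=B_i$. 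Here it is essential that the Cayley--Dickson norm $c\mapsto c\overline{c}=|c|^2$ is positive definite on every $C_i$ (even where $C_i$ has zero divisors), so that this step survives the non-associativity of $B_i$. Minimality then makes $Y$ impossible, giving $G$-simplicity.

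For the necessity direction, suppose $A$ is simple. Proposition \ref{crossedproductsimpleimplies} gives that $B_i$ is $G$-simple, so by the dictionary lemma $s$ is minimal. To see that $s$ is faithful, I would argue contrapositively: if $H=\ker s\neq\{e\}$, then $\sigma_h=\id_{B_i}$ for $h\in H$, and the assignment $u_g\mapsto u_{gH}$ extends to a surjective ring homomorphism $\pi\colon A\to B_i\rtimes^{\bar\sigma}(G/H)$ onto the skew group ring of the induced (well-defined) action of $G/H$. Its kernel is a two-sided ideal which is proper, since $\pi(1)\neq 0$, and nonzero, since it contains $u_h-1\neq 0$ for any $h\in H\setminus\{e\}$; thus $A$ is not simple. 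This disposes of faithfulness uniformly, with no case distinction on the order of $h$.

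For sufficiency, assume $s$ minimal and faithful. The dictionary lemma gives that $B_i$ is $G$-simple, so by Proposition \ref{crossedproductmaximalcommutativegroupoidgraded}(b) it remains only to show that $Z(A)=C_A(A)$ is simple. I would compute $Z(A)$ directly: writing $z=\sum_g b_g u_g$, commutation with all $u_h$ forces each $b_g$ to be $G$-invariant, while commutation with every $b\in B_i$ forces, pointwise, $b_g(x)\,b(s(g^{-1})x)=b(x)\,b_g(x)$ for all $b$ and $x$. Choosing a real-valued $b$ with $b(x)=0$ but $b(s(g^{-1})x)\neq 0$ (possible by Urysohn whenever $s(g^{-1})x\neq x$) shows that $b_g$ can be nonzero only on the fixed set $\mathrm{Fix}(g)=\{x : s(g)x=x\}$. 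For $g\neq e$ this set is closed and, as $G$ is abelian, $G$-invariant, hence empty by minimality and faithfulness (it cannot be all of $X$ without $\sigma_g=\id$), so $b_g=0$; and the surviving $b_e$ is a $G$-invariant continuous function with values in the centre of $C_i$, hence constant by density of orbits. Therefore $Z(A)\cong Z(C_i)\in\{\R,\C\}$, a field and so simple, and Proposition \ref{crossedproductmaximalcommutativegroupoidgraded}(b) yields simplicity of $A$. I expect the main obstacle to be exactly this centre computation together with the nonempty-hull step of the dictionary lemma: both are where the non-associativity and possible zero divisors of $C_i$ must be controlled, and both are tamed by the same two features of the Cayley--Dickson algebras, namely the positive-definite norm and the fact that the relevant real-valued (hence central and nuclear) elements associate and commute with everything.
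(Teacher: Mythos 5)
Your proposal is correct and follows essentially the same route as the paper: necessity of minimality via Proposition \ref{crossedproductsimpleimplies} together with the hull argument in which $F = \sum_{f \in J} f\overline{f}$ is strictly positive and hence invertible with real-valued central inverse, and sufficiency via Proposition \ref{crossedproductmaximalcommutativegroupoidgraded}(b) after showing that $Z(A)$ is a field of constants. The only cosmetic differences are that the paper witnesses non-simplicity under non-faithfulness by the explicit proper non-zero ideal of finite sums of elements $b(u_{hh'} - u_{hgh'})$ (which is your kernel of $\pi\colon A \to B_i \rtimes^{\bar{\sigma}} (G/H)$ in thin disguise), and in the centre computation the paper first uses invariance and minimality to make all coefficients constant before killing the non-identity ones with a Urysohn function, deferring the case $i=1$ to \cite{oinert09,oinertarxiv11}, whereas your fixed-point-set argument ($\mathrm{Fix}(g)$ closed, invariant, proper, hence empty) treats all $i \geq 0$ uniformly with $Z(A) \cong Z(C_i) \in \{\mathbb{R},\mathbb{C}\}$.
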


\begin{proof}
Fix $i \geq 0$ and put $A_i = B_i \rtimes^{\sigma} G$.

Suppose first that $A_i$ is simple.
By Proposition \ref{crossedproductsimpleimplies},
we get that $B_i$ is $G$-simple.
Then $s$ is minimal. In fact, suppose that
$Y$ is a non-empty closed $G$-invariant subset of $X$.
Let $I_Y$ denote the ideal of $B_i$ consisting of all $f \in B_i$
that vanish on $Y$. Since every compact
Hausdorff space is regular, it follows
that $I_Y$ is non-zero. Since $Y$ is $G$-invariant it follows that $I_Y$ is
$G$-invariant, and hence by the $G$-simplicity
of $B_i$, we get that $I_Y = B_i$. Thus,
$Y = X$. Now we show that $s$ is faithful.
It is easy to see that this is equivalent
to $\sigma$ being injective (see e.g. \cite{oinertarxiv11}).
Seeking a contradiction, suppose that
there is a non-identity morphism $g \in G$ with
$\sigma_g = \id_{B_i}$.
Let $I$ be the ideal of $A_i$
consisting of finite sums of elements
of the form $b (u_{hh'} - u_{hgh'})$
for $b \in B_i$ and $h,h' \in G$.
Then $I$ is non-zero (since $g$ is not an identity morphism)
and proper (since the sum of the coefficients
of each element in $I$ equals zero) giving us
a contradiction. Therefore $s$ is faithful.

Now suppose that $s$ is both minimal and faithful.
We show that $A_i$ is simple for $i \neq 1$
(the case $i=1$ was already treated in \cite{oinert09,oinertarxiv11}).
Again faithfulness is equivalent to
injectivity of $\sigma$.
Since $s$ is minimal we get that
$B_i$ is $G$-simple. In fact,
seeking a contradiction, suppose that there
is a non-trivial $G$-invariant ideal $I$ of $B_i$.
For a subset $J$ of $B_i$,
let $N_J$ denote the set 
$\bigcap_{f \in J} f^{-1}( \{ 0 \} )$.
We claim that $N_I$ is a closed, non-empty
proper $G$-invariant subset of $X$.
If we assume that the claim holds,
then $s$ is not minimal and we have reached a contradiction.
Now we show the claim.
Since $I$ is $G$-invariant
the same is true for $N_I$.
Since $I$ is non-zero it follows that $N_I$
is a proper subset of $X$.
Since each set $f^{-1}( \{ 0 \} )$, for $f \in I$,
is closed, the same is true for $N_I$.
Seeking another contradiction, suppose that $N_I$ is empty.
Since $X$ is compact,
there is a finite subset $J$ of $I$ such that
$N_J = N_I$. Then the function
$F = \sum_{f \in J} |f|^2 =
\sum_{f \in J} f \overline{f}$ belongs to $I$
and, since $N_J$ is empty, 
it has the property that $F(x) \neq 0$ for all $x \in X$.
Therefore $1 = F \cdot \frac{1}{F} \in I$.
This implies that $I = B_i$
which is a contradiction.
Therefore $N_I$ is non-empty.
To show that $A_i$ is simple it is,
by Proposition \ref{crossedproductmaximalcommutativegroupoidgraded}(b), 
enough to show that $Z(A_i)$ is simple.
We will in fact show that $Z(A_i)$ equals the field
${\Bbb R}$ which in particular is a simple ring.
It is clear that $Z(A_i) \supseteq {\Bbb R}$.
Now we show the reversed inclusion.
Take $\sum_{g \in G} f_g u_g \in Z(A_i)$ 
where $f_g \in B_i$, for $g \in G$,
and $f_g = 0$ for all but finitely many $g \in G$.
For every $h \in G$, the equality
$u_h \left( \sum_{g \in G} f_g u_g \right) 
= \left( \sum_{g \in G} f_g u_g \right) u_h$
holds. From the fact that $G$ is abelian,
we get that $f_g(s(h^{-1})(x)) = f_g(x)$,
for $g,h \in G$ and $x \in X$.
For every $g \in G$ choose a number
$z_g$ in the image of $f_g$.
Since $f_g \circ s(h^{-1}) = f_g$, for each $h\in G$, it follows that
the set $f_g^{-1}(z_g)$ is a non-empty
$G$-invariant closed subset of $X$.
Since $s$ is minimal it follows that 
$f_g^{-1}(z_g) = X$ and hence that 
$f_g = z_g$, for $g \in G$.
Take a non-identity $h \in G$.
From the fact that $s$ is faithful,
we get that there is $a \in X$
such that $s(h^{-1})(a) \neq a$.
Since $X$ is a compact Hausdorff space,
it follows by Urysohn's lemma that
there is a $p \in B_i$, taking its values in ${\Bbb R}$, such that
$p(a) \neq p(s(h^{-1})(a))$.
Since $\sum_{g \in G} z_g u_g$
commutes with $p$, we get that
$z_g ( p(x) - \sigma(g)(p)(x) ) = 0$
for all $x \in X$ and $g\in G$. By specializing
this equality to $x=a$ and $g=h$, we get that
$z_h (p(a) - p(s(h^{-1})(a)) = 0$
which in turn implies that $z_h = 0$.
Therefore $Z(A_i) \subseteq {\Bbb R}$. 
\end{proof}

\subsection{Ore Extensions}\label{oreextensions}

In this section, we show that
Theorem \ref{oinricsilmaximalcommutative}, 
which recently appeared in \cite{oinricsil11}, follows
from Theorem \ref{secondtheorem}. 

\begin{definition}
Let $B$ be an associative and unital ring
equipped with a ring endomorphism
$\sigma : B \rightarrow B$, and a 
$\sigma$-derivation $\delta$, 
i.e. an additive endomorphism of $B$ satisfying
$\delta(bc) = \sigma(b)\delta(c) + \delta(b)c$,
for $b,c \in B$.
The Ore extension $A$ of $B$ is defined to be polynomials
over $B$ in the indeterminate $x$,
subject to the
relation $xb = \sigma(b)x + \delta(b)$, for $b \in B$.
For a non-zero element $a \in B[x ; \sigma , \delta]$, 
we let $\deg(a)$ denote its
degree as a polynomial in $x$.
An ideal $I$ of $B$ is called $\sigma$-$\delta$-invariant
if $\sigma(I) \subseteq I$ and $\delta(I) \subseteq I$ hold.
The ring $B$ is called $\sigma$-$\delta$-simple
if there is no non-trivial $\sigma$-$\delta$-invariant
ideal of $B$.
If $\sigma = \id_B$, then $A$ is 
called a {\it differential polynomial ring} in which case 
$\sigma$-$\delta$-simplicity coincides with $\delta$-simplicity.
\end{definition}

\begin{proposition}\label{oreidealABideal}
If $A = B[x ; \sigma , \delta]$
is an Ore extension,
then
an ideal of $B$ is 
$\sigma$-$\delta$-invariant if and only if
it is $A$-invariant.
\end{proposition}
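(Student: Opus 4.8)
The plan is to prove both directions of the equivalence by directly comparing the action of $A$ on an ideal $I$ of $B$ with the simultaneous action of $\sigma$ and $\delta$. I would set $A = B[x;\sigma,\delta]$ and note that $A$ is a filtered (in fact $\mathbb{N}$-graded as a left $B$-module) ring with $B = A_0$ sitting inside as the degree-zero part, generated over $B$ by the single element $x$. The key structural fact I would isolate first is the multiplication rule $xb = \sigma(b)x + \delta(b)$ for $b \in B$, which already contains both $\sigma$ and $\delta$ as the ``leading'' and ``constant'' coefficients of $xb$.

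For the ``only if'' direction, suppose $I$ is $A$-invariant, so $AI \subseteq IA$. First I would take the containment $xI \subseteq AI \subseteq IA$. Writing a general element of $IA$ as a polynomial whose coefficients lie in $I$ (being careful that $IA = I[x;\sigma,\delta]$ because $I$ is a right ideal of $B$ and the Ore relation keeps coefficients in $I$ when $I$ is $\sigma$-$\delta$-invariant --- though at this stage I only know $AI\subseteq IA$), I would compare coefficients. For each $b \in I$, the element $xb = \sigma(b)x + \delta(b)$ must lie in $IA$; matching the degree-one coefficient forces $\sigma(b) \in I$ and matching the degree-zero coefficient forces $\delta(b) \in I$. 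This shows $\sigma(I) \subseteq I$ and $\delta(I) \subseteq I$, i.e. $I$ is $\sigma$-$\delta$-invariant.

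For the ``if'' direction, suppose $I$ is $\sigma$-$\delta$-invariant. I want $AI \subseteq IA$. Since $A$ is generated as a ring by $B$ together with $x$, it suffices to show $x^n I \subseteq IA$ for all $n \geq 0$, and by induction it is enough to handle $n=1$, namely $xI \subseteq IA$. For $b \in I$ we have $xb = \sigma(b)x + \delta(b)$, and both $\sigma(b)$ and $\delta(b)$ lie in $I$ by hypothesis, so $xb \in I x + I \subseteq IA$. An induction on the degree of an arbitrary element $a \in A$, pushing each factor of $x$ past the elements of $I$ using the Ore relation repeatedly, then yields $aI \subseteq IA$ in general; the bookkeeping is a straightforward induction on $\deg(a)$, using that $I$ is closed under both $\sigma$ and $\delta$ at every step.

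The main obstacle I anticipate is purely notational rather than conceptual: verifying that comparing coefficients in $IA$ is legitimate, i.e. that $IA$ genuinely consists of polynomials with coefficients in $I$ and that the grading/filtration argument lets one read off $\sigma(b)$ and $\delta(b)$ independently. Once one checks that $I$ a (two-sided) ideal of $B$ makes $IA = \bigoplus_{n} I x^n$ as an additive group with the natural identification, the coefficient-matching is clean. I would also remark that this result makes the Ore-extension case fit the general framework, so that Theorem \ref{secondtheorem} can be invoked to recover Theorem \ref{oinricsilmaximalcommutative}.
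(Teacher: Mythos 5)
Your proof is correct and takes essentially the same route as the paper's: the direction ($A$-invariant $\Rightarrow$ $\sigma$-$\delta$-invariant) via the identification $IA = \bigoplus_{n \geq 0} I x^n$ (freeness of $A$ as a left $B$-module over the basis $\{x^n\}$) and coefficient comparison in $xb = \sigma(b)x + \delta(b)$, and the converse via induction on $n$ showing $x^n I \subseteq IA$ from the single relation. Your momentary worry about circularity in describing $IA$ is resolved exactly as you note at the end: since elements of $I$ multiply polynomials only from the left, $IA = \bigoplus_{n} I x^n$ follows from $I$ being a right ideal of $B$ alone, with no appeal to the Ore relation.
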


\begin{proof}
Let $I$ be an ideal of $B$.

First we show the ''if'' statement.
Suppose that $I$ is $A$-invariant.
Take $b \in I$. Then
$\sigma(b)x + \delta(b) = xb \in xI
\subseteq IA$.
Since $IB=I$ and $A$
is a free left $B$-module with 
the non-negative powers of $x$ as a basis, 
this implies that $\sigma(b) \in I$ 
and $\delta(b) \in I$.
Hence $\sigma(I) \subseteq I$ 
and $\delta(I) \subseteq I$.
Therefore $I$ is $\sigma$-$\delta$-invariant.

Now we show the ''only if'' statement.
Suppose that $I$ is $\sigma$-$\delta$-invariant.
We claim that if $n$ is a non-negative integer, 
then the inclusion $B x^n I \subseteq I A$ holds.
If we assume that the claim holds, then,
since $A$ is a free left $B$-module
with the non-negative powers of $x$ as a basis,
we get that 
$A I \subseteq I  A$
and hence that $I$ is an 
$A$-invariant ideal.
Now we show the claim by induction over $n$.
Since $B x^0 I = B 1_B I = B I = I =
IB \subseteq IA$,
the claim holds for $n=0$.
The claim also holds for $n=1$ since
$B x I \subseteq B \sigma(I) x + B \delta(I) 
\subseteq B I x + BI = Ix + I \subseteq IA$.
Suppose now that the claim holds for 
an integer $n \geq 1$.
By the induction hypothesis and
the case $n=1$, we get that 
$B x^{n+1} I = B x x^n I \subseteq Bx B x^n I \subseteq
Bx I A \subseteq I A A = I A$,
which completes the proof.
\end{proof}

The following result appeared in \cite{oinricsil11}.
We show that it follows from the results
in Section \ref{mainresults}.

\begin{proposition}\label{injectiveore}
If $A = B[x ; \sigma , \delta]$
is a simple Ore extension,
then $B$ is $\sigma$-$\delta$-simple.
\end{proposition}

\begin{proof}
This follows immediately from Theorem \ref{firsttheorem}
and Proposition \ref{oreidealABideal}.
\end{proof}

\begin{proposition}\label{degreemapA/Z(B)}
If $A = B[x ; \delta]$ is a differential polynomial ring,
then
{\rm (a)} $A/Z(B)$ has a degree map;
{\rm (b)} if $B$ is $\delta$-simple, then
$A/A$ has a degree map and thereby $A/Z(A)$ has the ideal intersection property.
\end{proposition}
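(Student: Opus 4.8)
The plan is to define, for a non-zero $a = \sum_{i=0}^n b_i x^i \in A = B[x;\delta]$ (with $b_n \neq 0$), the value $d(a) = \deg(a)+1 = n+1$, and to set $d(0)=0$. Condition (d1) is then immediate: $d(a)=0$ precisely when the polynomial is zero. For (d2) I would take $X = B \cup \{x\}$, which clearly generates $A$ as a ring. So the work is entirely in verifying the commutator-shrinking condition in part (d2) for both parts (a) and (b), with the generating set $X$ and the centralizer $C_A(B)$ adjusted to $Z(B)$ in (a) and to $A$ itself in (b).

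\textbf{Part (a): the degree map for $A/Z(B)$.}
Here $B = Z(B)$ is replaced by its center, so the generating set $X$ should be a subset of $Z(B)$ together with $x$; more precisely I would take $X = Z(B) \cup \{x\}$. Given a non-zero ideal $I$ of $A$ and a non-zero $a = \sum_{i=0}^n b_i x^i \in I$, I would simply put $a' = a$ and check that each commutator $a'c - ca'$, for $c \in X$, has strictly smaller degree. First, for $c = z \in Z(B)$, the relation $xz = \sigma(z)x + \delta(z) = zx + \delta(z)$ (recall $\sigma = \id$) shows that the top-degree coefficient of $az - za$ is $b_n z - z b_n = 0$ since $z$ is central, so $\deg(az-za) < \deg(a)$, i.e.\ $d(az-za) < d(a)$. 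Second, for $c = x$, the leading coefficient of $ax - xa = \sum_i b_i x^{i+1} - \sum_i (b_i x + \delta(b_i)) x^i = -\sum_i \delta(b_i)x^i$ is $-\delta(b_n)$, which lies in degree $\le n$; hence $\deg(ax-xa) \le n < n+1 = \deg(a)+1$, giving $d(ax - xa) < d(a)$. Since $d(a') = d(a)$ trivially, condition (d2) holds and $A/Z(B)$ has a degree map.

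\textbf{Part (b): the degree map for $A/A$ assuming $\delta$-simplicity.}
Now the centralizer in question is $A$ itself, so the subtle point is producing a reduction that shrinks commutators against \emph{all} of $X = B \cup \{x\}$, not just against central elements; this is where $\delta$-simplicity must enter and is the main obstacle. The strategy is, given non-zero $a = \sum_{i=0}^n b_i x^i \in I$ with leading coefficient $b_n$, to use $\delta$-simplicity of $B$ to replace $a$ by an element $a' \in I$ with $d(a') \le d(a)$ whose leading coefficient is $1_B$ (a \emph{monic} element of the same or lower degree). To manufacture such an $a'$, I would consider the set $L$ of leading coefficients of degree-$n$ elements of $I$ (together with $0$); a standard computation using $x b = bx + \delta(b)$ shows that $L$ is a $\delta$-invariant ideal of $B$, so by $\delta$-simplicity $L = B$, whence $1_B \in L$ and a monic $a'$ of degree $n$ exists in $I$. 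With $a'$ monic, the leading coefficient of $a'x - xa'$ vanishes exactly as in part (a), and for $b \in B$ the leading coefficient of $a'b - ba'$ equals $1_B \cdot b - b \cdot 1_B = 0$; thus $d(a'b - ba') < d(a)$ for every $b \in X$, and $d(a') \le d(a)$. This establishes (d2) and hence the degree map. The final assertion, that $A/Z(A)$ has the ideal intersection property, then follows immediately by applying Lemma \ref{thirdtheorem} to the degree map just constructed, since for a differential polynomial ring the relevant centralizer $C_A(B)$ coincides with $Z(A)$ once $B$ is $\delta$-simple.
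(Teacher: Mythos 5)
Your proposal follows essentially the same route as the paper: the same degree function $d = \deg + 1$, the choice $a' = a$ with a leading-coefficient cancellation in (a), and a monic reduction via $\delta$-simplicity in (b). In fact, your argument that the set $L$ of leading coefficients of degree-$n$ elements of $I$ (together with $0$) is a non-zero $\delta$-invariant ideal of $B$ — using $xa - ax = \sum_i \delta(b_i)x^i$ for $\delta$-invariance — is precisely the content of Lemma 4.13 of \cite{oinricsil11}, which the paper cites rather than reproves, so you have correctly filled in that step. Two small corrections are in order. First, in part (a) the definition of a degree map for $A/Z(B)$ requires the generating set $X$ to be a \emph{subset of} $Z(B)$, so your choice $X = Z(B) \cup \{x\}$ is not admissible as stated; this is harmless, since your verification for $c \in Z(B)$ alone already establishes (d2) with $X = Z(B)$ (the paper's choice), and the extra case $c = x$ is simply superfluous in (a). Second, your closing justification is misphrased: Lemma \ref{thirdtheorem} is applied to the extension $A/A$, for which you constructed the degree map, and the relevant centralizer is $C_A(A) = Z(A)$ \emph{by definition of the center} — $\delta$-simplicity plays no role in that identification (it was used only to produce the monic representative $a'$).
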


\begin{proof}
For a non-zero $p \in A$, define
$d(p) = \deg(p)+1$. Let $d(0)=0$.

(a) Let $X = Z(B)$.
Condition (d1) is trivially satisfied.
Now we check condition (d2).
Take a non-zero ideal $I$ of $A$ and
a non-zero $a \in I$ and $b \in X$.
If $d(a) = 1$, then, trivially,
$ab-ba=0$ and, hence, $d(ab-ba)=d(0)=0 < d(a)$.
Now suppose that $d(a) = n+1 \geq 2$, i.e. $\deg(a) = n \geq 1$.
For each non-negative integer $i$, we may write $x^i b = \sum_{j=0}^i s_{i,i-j}(b) x^j$
using some elements $s_{i,0}(b),s_{i,1}(b),\ldots,s_{i,i}(b)$ of $B$.
Let $a = \sum_{i=0}^n b_i x^i$, for $\{b_0,b_1,\ldots,b_n\} \subseteq B$,
with $b_n \neq 0$. Then, since $b \in Z(B)$, we get that
\begin{eqnarray}
ab - ba &=
\left( \sum_{i=0}^n b_i x^i \right)b - 
b \left( \sum_{i=0}^n b_i x^i \right) =
\sum_{i=0}^n (b_i x^i b - b b_i x^i)  \nonumber \\
&=
\sum_{i=0}^n \left( \left( \sum_{j=0}^i b_i s_{i,i-j}(b) x^j \right) - b b_i x^i \right). \label{eq:ABBAcalculation}
\end{eqnarray}
Since $b \in Z(B)$, we get that the coefficient 
of the term of degree $n$ equals
$b_n s_{n,0}(b) - b b_n = b_n b - b b_n = 0$.
Therefore, we get that $d(ab-ba) \leq n < n+1 = d(a)$.
So we can put $a'=a$.
This shows (d2).

(b) Let $X = B \cup \{ x \}$.
Condition (d1) is trivially satisfied.
Now we check condition (d2).
Take a non-zero ideal $I$ of $A$ and
a non-zero $a \in I$ and $b \in X$.
By an argument similar to the graded case (Proposition \ref{abeliandegreemap}) we may assume
that the highest degree coefficient of $a$ is 1
(see Lemma 4.13 in \cite{oinricsil11}), i.e. $a = \sum_{i=0}^n b_i x^i$ where $b_n = 1$.
We now consider two cases.
Case 1: Take $b \in B$. By the calculation in \eqref{eq:ABBAcalculation},
we get that $d(ab - ba) < d(a)$.
Case 2: Let $b = x$. 
Then, since $\delta(1)=0$, we get that
\begin{align*}
ax - xa &= \left( \sum_{i=0}^n b_i x^i \right)x - 
x \left( \sum_{i=0}^n b_i x^i \right) =
\sum_{i=0}^n ( b_i x^{i+1} - (b_i x + \delta(b_i))x^i ) \\
&= \sum_{i=0}^n (b_i x^{i+1} - b_i x^{i+1} - \delta(b_i)x^i) =
-\sum_{i=0}^n \delta(b_i) x^i =
-\sum_{i=0}^{n-1} \delta(b_i) x^i.
\end{align*}
Therefore, we get that
$d(ax - xa) \leq n < n+1 = d(a)$.
So we can put $a'=a$.
This shows (d2).
The final conclusion now follows from Lemma \ref{thirdtheorem}.
\end{proof}

\noindent {\bf Proof of Theorem \ref{oinricsilmaximalcommutative}.}
(a) This follows immediately from Theorem \ref{secondtheorem}, Proposition \ref{oreidealABideal} 
and Proposition \ref{degreemapA/Z(B)}(a), if we can show
that every intersection of $B$ with an ideal of $A$
is $\delta$-invariant. Take an ideal $I$ of $A$.
If $b \in I \cap B$, then
$\delta(b) = bx + \delta(b) - bx = xb - bx \in I \cap B$.

(b) The necessary part follows immediately from
Proposition \ref{injectiveore}
and the fact that the center of any associative
unital ring is a field.
The sufficient part follows immediately 
from Theorem \ref{secondtheorem} 
and Proposition \ref{degreemapA/Z(B)}(b).
{\hfill $\qed$}

\section*{Acknowledgements}
The second author was partially supported by The Swedish Research Council (postdoctoral fellowship no. 2010-918
and repatriation grant no. 2012-6113) and
The Danish National Research Foundation (DNRF) through the Centre for Symmetry and Deformation.

\end{document}